\newcommand{\eqn}{\begin{eqnarray}}
\newcommand{\een}{\end{eqnarray}}
\newtheorem{theorem}{Theorem}[section]
\newtheorem{corollary}[theorem]{Corollary}
\newtheorem{lemma}{Lemma}[section]
\newtheorem{proposition}[theorem]{Proposition}
\theoremstyle{definition}
\newtheorem{remark}{Remark}
\numberwithin{equation}{section}
\newcommand{\dv}{\operatorname{div}}
\newcommand*{\tran}{^{\mkern-1.5mu\mathsf{T}}}
\numberwithin{equation}{section}
\newcommand{\bR}{\mathbb{R}}
\newcommand\cL{\mathcal{L}}
\newcommand\sL{\mathscr{L}}
\providecommand{\set}[1]{\{#1\}}
\providecommand{\abs}[1]{\lvert#1\rvert}
\providecommand{\Abs}[1]{\left\lvert#1\right\rvert}
\providecommand{\norm}[1]{\lVert#1\rVert}
\providecommand{\tri}[1]{\lvert\!\lvert\!\lvert#1\rvert\!\rvert\!\rvert}
\DeclareMathOperator*{\esssup}{ess\,sup}
\begin{document}

\title[Uniqueness of solutions for Keller-Segel-Fluid model]{Uniqueness of solutions for Keller-Segel system of porous medium type coupled to fluid equations}

\author{Hantaek Bae}
\address{Department of Mathematical Sciences, Ulsan National Institute of Science and Technology (UNIST), Republic of Korea}
\email{hantaek@unist.ac.kr}

\author{Kyungkeun Kang}
\address{Department of Mathematics, Yonsei University, Seoul, Republic of Korea}
\email{kkang@yonsei.ac.kr}

\author{Seick Kim}
\address{Department of Mathematics, Yonsei University, Seoul, Republic of Korea}
\email{kimseick@yonsei.ac.kr}

\date{\today}

\subjclass[2010]{Primary 35A02; Secondary 35K65, 35Q92}

\keywords{Keller-Segel equation, Porous media, H\"older regularity,
Uniqueness, Duality argument, Vanishing viscosity, Green matrix of parabolic equation}

\begin{abstract}
We prove the uniqueness of H\"older continuous weak solutions via duality argument
and vanishing viscosity method for the Keller-Segel system of porous medium type equations
coupled to the Stokes system in dimensions three. An important step is the estimate of the Green
function of parabolic equations with lower order terms of variable coefficients, which seems to be of independent interest.
\end{abstract}

\maketitle

%%%%%%%%%%%%%%%%%%%%
\section{Introduction}
%%%%%%%%%%%%%%%%%%%

In this paper, we consider a mathematical model of the dynamics of swimming bacteria \emph{Bacillus subtilis} in \cite{Tuval}, where the movement of bacteria is formulated as a form of porous medium equation. More precisely, we are concerned with the following model
\begin{subequations} \label{KS}
\begin{align}
\partial_{t}\eta+v\cdot \nabla \eta-\Delta \eta^{1+\alpha} +\nabla \cdot \left(\chi(c)\eta^{q}\nabla c\right)=0 &\quad \text{in $\mathbb{R}^{3}_{T}$},  \\
\partial_{t}c+v\cdot \nabla c-\Delta c+\kappa(c)\eta=0 &\quad \text{in $\mathbb{R}^{3}_{T}$},\\
\partial_{t}v-\Delta v +\nabla p+\eta\nabla \phi=0 &\quad \text{in $\mathbb{R}^{3}_{T}$},\\
\nabla \cdot v=0 &\quad \text{in $\mathbb{R}^{3}_{T}$},\\
\eta(0,x)=\eta_{0}(x), \ \ c(0,x)=c_{0}(x), \ \ v(0,x)=v_{0}(x) &\quad \text{in $\mathbb{R}^{3}$},
\end{align}
\end{subequations}
where $\alpha>0$ and $q\ge 1$ are given constants, and $\mathbb{R}^{3}_{T}=(0,T)\times \mathbb{R}^{3}$. Here $\eta$, $c$, $v$, and $p$ indicate biological cell density, the oxygen concentration, the fluid velocity, and the pressure, respectively. The nonnegative functions $k(c)$ and $\chi(c)$ denote the oxygen consumption rate and the chemotactic sensitivity, which are assumed to be locally bounded functions of $c$. Furthermore, the function $\phi$ is a time-independent potential function which indicates, for example, the gravitational force or centrifugal force. The system was proposed by Tuval et al. in \cite{Tuval} (see also \cite{CFKLM}) for the case that $\alpha=0$, $q=1$ and fluid equations are the Navier-Stokes system (see e.g. \cite{ckl, ckl-cpde, ckl-jkms, win2, win3} for related mathematical results).

Very recently, in \cite[Theorem 1.8-Theorem 1.10]{Chung 2}, the
existence of weak solutions and H\"older continuous weak solutions
are proved under certain assumptions on $(\alpha, q, \chi,\kappa)$
(compare to \cite{CK-2016, FLM, TW-13, win4}). It is, however,
unknown whether or not such solutions are unique. Our main
motivation is to show the uniqueness of the H\"older continuous weak
solutions of \eqref{KS}, for which we take the following simplified
model by taking $\chi =1$, $\kappa(c)=c$ and $q=1$, since the system
(\ref{KS}) is highly nonlinear and general $\chi$, $\kappa$ and
$q>1$ seem to be beyond our analysis (see Remark \ref{comment-100}).
So, we consider the following system of equations
\begin{subequations} \label{KS-PE}
\begin{align}
\partial_{t}\eta+v\cdot \nabla \eta-\Delta \eta^{1+\alpha} +\nabla \cdot \left(\eta\nabla c\right)=0 &\quad \text{in $\mathbb{R}^{3}_{T}$},  \\
\partial_{t}c+v\cdot \nabla c-\Delta c+c\eta=0 &\quad \text{in $\mathbb{R}^{3}_{T}$},\\
\partial_{t}v-\Delta v +\nabla p+\eta\nabla \phi=0 &\quad \text{in $\mathbb{R}^{3}_{T}$},\\
\nabla \cdot v=0 &\quad \text{in $\mathbb{R}^{3}_{T}$},\\
\eta(0,x)=\eta_{0}(x), \ \ c(0,x)=c_{0}(x), \ \ v(0,x)=v_{0}(x)
&\quad \text{in $\mathbb{R}^{3}$}.
\end{align}
\end{subequations}

Since the system \eqref{KS-PE} satisfies the assumptions in \cite[Theorem 1.10]{Chung 2}, it is straightforward that there exist H\"older continuous weak solutions when $\alpha>\frac{1}{8}$ and initial data are sufficiently regular. In this case, we can show that such solutions become unique. Our main result reads as follows:

\begin{theorem} \label{uniqueness theorem}
Let $\alpha>\frac{1}{8}$ and $(\eta_{0},c_{0},v_{0})$ satisfy
\begin{equation*}
\begin{split}
& \eta_{0} (1+|x|+|\ln \eta_{0}|)\in L^{1}(\mathbb{R}^{3}), \ \  \eta_{0}\in L^{\infty}(\mathbb{R}^{3}), \\
&  c_{0} \in L^{\infty}(\mathbb{R}^{3}) \cap
H^{1}(\mathbb{R}^{3})\cap W^{1,m}(\mathbb{R}^{3}), \ \ v_{0}\in
L^{2}(\mathbb{R}^{3})\cap W^{1,m}(\mathbb{R}^{3}) \quad \text{for
any $m<\infty$}.
\end{split}
\end{equation*}
Then, H\"older continuous weak solutions of the system \eqref{KS-PE} are unique.
\end{theorem}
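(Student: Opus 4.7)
The plan is to prove uniqueness via a duality argument combined with a vanishing-viscosity regularization of the degenerate diffusion. Let $(\eta_i,c_i,v_i)$, $i=1,2$, be two H\"older continuous weak solutions of \eqref{KS-PE} with common initial data, and write $\bar\eta=\eta_1-\eta_2$, $\bar c=c_1-c_2$, $\bar v=v_1-v_2$. Subtracting the two systems and using the identity
\[
\eta_1^{1+\alpha}-\eta_2^{1+\alpha}=a(t,x)\,\bar\eta,\qquad a(t,x):=(1+\alpha)\int_0^1\bigl(s\eta_1+(1-s)\eta_2\bigr)^\alpha\,ds,
\]
one finds that $\bar\eta$ satisfies the linear degenerate parabolic equation
\[
\partial_t\bar\eta+v_1\cdot\nabla\bar\eta-\Delta(a\bar\eta)+\nabla\cdot(\bar\eta\nabla c_1)=-\bar v\cdot\nabla\eta_2-\nabla\cdot(\eta_2\nabla\bar c),
\]
while $\bar c$ solves a uniformly parabolic equation whose right-hand side is linear in $\bar\eta$ and $\bar v$, and $\bar v$ solves a Stokes system forced by $\bar\eta\nabla\phi$. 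Because $\eta_1,\eta_2$ are nonnegative, bounded, and H\"older continuous, so is $a\ge 0$; but $a$ may vanish where both densities vanish, which is the crux of the difficulty.

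To handle the degeneracy, I would fix $\varepsilon>0$ and a smooth test function $\psi$, and consider the regularized adjoint backward Cauchy problem
\[
-\partial_t\phi-(a+\varepsilon)\Delta\phi-v_1\cdot\nabla\phi-\nabla c_1\cdot\nabla\phi=\psi,\qquad \phi(T,\cdot)=0.
\]
Since $a+\varepsilon$ is uniformly elliptic with H\"older continuous leading coefficient, and since $v_1,\nabla c_1$ sit in high-integrability classes inherited from the assumptions on the initial data together with standard Stokes and parabolic estimates for the subsystems governing $v$ and $c$, this problem admits a classical solution. The decisive input, and the technical centerpiece flagged in the abstract, is an $\varepsilon$-uniform Gaussian-type Green matrix estimate that controls $\phi$, $\nabla\phi$, and $(a+\varepsilon)\Delta\phi$ in norms dual to those in which $\bar v\cdot\nabla\eta_2$ and $\eta_2\nabla\bar c$ naturally live.

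Pairing the $\bar\eta$ equation against $\phi$ and integrating by parts transfers every derivative onto $\phi$ and produces, after absorbing the $\varepsilon\,\Delta\phi$ mismatch between $a$ and $a+\varepsilon$ via the same $\varepsilon$-uniform estimates, an inequality of the form
\[
\left|\int_0^T\!\!\int_{\mathbb{R}^{3}}\bar\eta\,\psi\,dx\,dt\right|\le C\bigl(\|\bar v\|_{X}+\|\nabla\bar c\|_{Y}\bigr)+o_\varepsilon(1),
\]
for function spaces $X,Y$ dictated by the Green-matrix bounds. Taking the supremum over $\psi$ in the unit ball of an appropriate predual yields a direct bound on $\bar\eta$ in some $L^p_tL^q_x$ norm, while standard maximal-regularity estimates for the Stokes equation and for the linear parabolic equation governing $\bar c$ bound $\|\bar v\|_X$ and $\|\nabla\bar c\|_Y$ in terms of $\|\bar\eta\|_{L^p_tL^q_x}$ plus lower-order $\|\bar c\|$ contributions absorbed by Gronwall. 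Running this argument on sub-intervals $[0,t]\subset[0,T]$ and passing to the limit $\varepsilon\to 0$ closes a Gronwall inequality that forces $(\bar\eta,\bar c,\bar v)\equiv 0$.

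The main obstacle is precisely the Green matrix estimates themselves: the principal coefficient is only H\"older continuous (not $C^1$) and may degenerate, and the drift and zero-order coefficients $v_1,\nabla c_1$ are not pointwise bounded but merely belong to integrability classes coming from the a priori regularity available for the velocity field and the oxygen concentration. Establishing pointwise Gaussian bounds on the Green matrix together with controlled first and second derivative bounds that remain uniform as $\varepsilon\to 0$ is the technical heart of the argument and is the step explicitly highlighted as being of independent interest.
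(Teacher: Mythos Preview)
Your overall architecture (duality plus vanishing viscosity) matches the paper, but you have misplaced the role of the Green function estimate and this leaves a real gap in the passage $\varepsilon\to 0$. In the paper the Gaussian bounds of Theorem~\ref{theorem c} are \emph{not} applied to the degenerate dual problem with leading coefficient $a+\varepsilon$; they are applied to the \emph{uniformly parabolic} equation satisfied by $\bar c$ (operator $\partial_t - \Delta + v_1\cdot\nabla + \eta_2$, with $v_1,\eta_2$ bounded and H\"older because the solutions are H\"older continuous). Those bounds are used to write $\bar c$ and $\bar v$ as explicit space--time integrals of $\bar\eta$, which are then substituted into the $\bar\eta$-equation \emph{before} forming the dual. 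The resulting dual problem is therefore nonlocal in time (see \eqref{omega equation}), and its $\delta$-regularization is analyzed not via Green function bounds but by an $L^2$ energy estimate: one multiplies by $-\Delta\psi^\delta$, integrates by parts (crucially using $\nabla\cdot v_1=0$ to handle the drift without losing a derivative), and obtains $\delta\int\!\!\int|\Delta\psi^\delta|^2\le C$ with $C$ independent of $\delta$. This is what makes $\delta\int\!\!\int\bar\eta\,\Delta\psi^\delta\to 0$ by Cauchy--Schwarz.

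Your proposed route instead asks for Gaussian bounds on the Green function of the operator with principal coefficient $a+\varepsilon$, uniformly in $\varepsilon$. Such uniformity is not available: the ellipticity constant is $\varepsilon$ on the set $\{a=0\}$, and the constants in any Aronson-type bound degenerate accordingly. Moreover, even with uniform Gaussian bounds on $\Gamma$ and $\nabla\Gamma$, these do not by themselves give $\varepsilon$-uniform control of $\Delta\phi$ (the kernel bound $\abs{\nabla^2\Gamma}\lesssim (t-s)^{-(n+2)/2}$ is non-integrable near the diagonal). Finally, note that in this H\"older-continuous setting $v_1$ and $\nabla c_1$ \emph{are} pointwise bounded (indeed the paper uses $\|\nabla v_1\|_{L^\infty_T L^\infty}$ and $\|\nabla^2 c_1\|_{L^\infty_T L^\infty}$), so the difficulty you flag about merely integrable drifts does not arise; the genuine difficulty is the degeneracy of $a$, and it is resolved by the energy argument on the dual problem, not by Green-function machinery.
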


%\begin{remark}
%In 2D,
%\end{remark}

We note that there are some known results regarding uniqueness of H\"older continuous weak solutions of Keller-Segel equations of the porous medium type (see \cite{Kim, Sugiyama}).
\begin{subequations} \label{PKS}
\begin{align}
\partial_{t}\eta-\Delta \eta^{1+\alpha} +\nabla \cdot \left(\eta\nabla c\right)=0 &\quad \text{in $\mathbb{R}^{3}_{T}$},  \\
\partial_{t}c-\Delta c+c-\eta=0 &\quad \text{in $\mathbb{R}^{3}_{T}$},\\
\eta(0,x)=\eta_{0}(x), \ \ c(0,x)=c_{0}(x) &\quad \text{in
$\mathbb{R}^{3}$}.
\end{align}
\end{subequations}

In principle, we apply  the duality argument and the vanishing
viscosity method used in  \cite{Sugiyama}. However, the nonlinearity
of the equation for $c$, presence of the fluid equations as well as
drift terms in the equations of $\eta$ and $c$, cause other kinds of
difficulties, which do not seem to allow the techniques in \cite{Sugiyama} directly applicable. Nevertheless, we prove the
uniqueness of H\"older continuous weak solutions of the system
(\ref{KS-PE}) via adapted methods of proofs in \cite{Sugiyama}
with estimates of the Green function of parabolic equations.

In the following we briefly explain how our methods proceed. Let $(\eta_{1}, c_{1}, v_{1}, p_{1})$ and $(\eta_{2}, c_{2}, v_{2},p_{2})$ be two H\"older continuous weak solutions of the system (\ref{KS-PE}) and let
\[
\eta=\eta_{1}-\eta_{2}, \quad c=c_{1}-c_{2}, \quad v=v_{1}-v_{2}, \quad p=p_{1}-p_{2}.
\]
We then see that $(\eta, c, v,p)$ solves
\begin{subequations} \label{PKS}
\begin{align}
\partial_{t}\eta+v_{1}\cdot \nabla \eta + v\cdot\nabla \eta_{2}-\Delta \left(\eta^{1+\alpha}_{1}-\eta^{1+\alpha}_{2}\right)
+\nabla \cdot \left(\eta\nabla c_{1}-\eta_{2}\nabla c\right)=0 &\quad \text{in $\mathbb{R}^{3}_{T}$},  \\
\partial_{t}c+v_{1}\cdot \nabla c+\eta_{2}c-\Delta c+c_{1}\eta + v\cdot \nabla c_{2}=0 &\quad \text{in $\mathbb{R}^{3}_{T}$},\\
\partial_{t}v-\Delta v+\nabla p+\eta\nabla \phi=0 &\quad \text{in $\mathbb{R}^{3}_{T}$},\\
\nabla\cdot v=0 &\quad \text{in $\mathbb{R}^{3}_{T}$}.
\end{align}
\end{subequations}

Next, we express $c$ and $v$ as integral forms involving $\eta$ by using the representation formula via the Green functions for parabolic equations with lower order terms and Stokes system in three dimensions. We then substitute them to the equation of $\eta$ to get an equation of the form $((\mathcal{D} \eta, \Phi)) = 0$ for any appropriate test function $\Phi$, where $\mathcal{D}$ is a differential operator involving $\eta_1, \eta_{2}, v_{1}, c_{1},\phi$ for $\eta$ and $((, \cdot, ))$ is the pairing in space and time. Let $\mathcal{D}^*$ be the adjoint operator for $\mathcal{D}$ defined at \eqref{def-D} in section 3. Then $((\mathcal{D} \eta, \Phi)) =((\eta, \mathcal{D}^{\ast} \Phi)) = 0$. If we can solve $\mathcal{D}^* \Phi= 0$ for $\Phi$, given any $\Phi_{0}\in X$ (which will specified below) it would follow that $\eta \equiv 0$, establishing uniqueness of solutions to the system (\ref{KS-PE}).

When we prove Theorem \ref{uniqueness theorem}, we formulate the dual problem in terms of a new function $\zeta$ defined in (\ref{omega equation}), and it contains the term $A^{1+\alpha}\Delta \zeta$, where $A^{1+\alpha}$ is defined in (\ref{eta quotient}). Since $A^{1+\alpha}$ is degenerate, we add $\delta \Delta$ to the equation of $\zeta$. After solving the equation of $\zeta^{\delta}$ for each $\delta>0$, we show that
\[
\lim_{\delta\rightarrow 0}\delta \int_{\mathbb{R}^{3}_{T}} f(\tau,x)\Delta \zeta^{\delta}(\tau,x)dxd\tau=0
\]
for all $f\in L^{2}(\mathbb{R}^{3}_{T})$.

\vspace{1ex}

One of main tools of proving Theorem \ref{uniqueness theorem}  is some point-wise estimates of the \emph{Green function of a parabolic equation with lower order terms of variable coefficients}, which seems to be of independent interest. More precisely, consider the equation of the form
\begin{equation} \label{equation of f}
\begin{split}
\partial_{t}f+a\cdot \nabla f+b f-\Delta f=F \quad &\text{in $\mathbb{R}^{3}_{T}$}, \\
f(0,x)=f_{0}(x) \quad &\text{in $\mathbb{R}^{3}$},
\end{split}
\end{equation}
where $a:\mathbb{R}^{3}_{T}\rightarrow \mathbb{R}^3$ and
$b:\mathbb{R}^{3}_{T}\rightarrow \mathbb{R}$ are given vector field
and scalar function, respectively. We then have the following:

\begin{theorem}\label{theorem c}
Let $a\in C^{\beta,\beta/2}\left(\mathbb{R}^{3}_{T}\right)$, $b \in C^{\beta,\beta/2}\left(\mathbb{R}^{3}_{T}\right)$ for some $0<\beta<1$, with $\nabla \cdot a=0$ and $b\ge 0$. Then, a solution of the system (\ref{equation of f}) can be written as
\[
f(t,x)=\int_{\mathbb{R}^{3}}\Gamma(t,x,0,y)f_{0}(y)dy+ \int^{t}_{0}\int_{\mathbb{R}^{3}}\Gamma(t,x,s,y)F(s,y)dyds,
\]
where $\Gamma(t,x,s,y)$ is the fundamental solution of a parabolic operator $\mathcal{L}$:
\[
\mathcal{L}=\partial_{t}-\Delta +a\cdot \nabla  +b.
\]
Moreover, $\Gamma(t,x,s,y)$ satisfies the following pointwise bounds:
\begin{equation}\label{gaussian011}
\abs{\Gamma(t,x,s,y)} \le C_0 (t-s)^{-\frac{3}{2}} \exp\left(-\frac{c\abs{x-y}^2}{t-s}\right),\quad (0\le s <t \le T),
\end{equation}
where $c=\frac14 -\epsilon$ for any $0<\epsilon <\frac14$ and $C_0 =C_0\bigl(T, \norm{a}_{L^\infty(\mathbb{R}^{3}_{T})}, \norm{b}_{L^\infty(\mathbb{R}^{3}_{T})}, \epsilon \bigr)$.
Moreover,
\begin{equation} \label{gaussian031}
\begin{split}
&\abs{\nabla_{x} \Gamma(t,x,s,y)} \le C_1 (t-s)^{-\frac{3+1}{2}} \exp\left(-\frac{c_{1}\abs{x-y}^2}{t-s}\right), \quad (0\le s <t \le T),\\
& \abs{\nabla_{x}^2 \Gamma(t,x,s,y)}+\abs{ \partial_t \Gamma(t,x,s,y)} \le C_2 (t-s)^{-\frac{3+2}{2}} \exp\left(-c_2\frac{\abs{x-y}^2}{t-s}\right), \quad (0\le s <t \le T)
\end{split}
\end{equation}
where $c_i>0$ is an absolute constant and $C_i =C_i  \bigl(T, \norm{a}_{C^{\beta, \beta/2}(\bR^3_T)}, \norm{b}_{C^{\beta,\beta/2}(\bR^3_T)}\bigr)$, $i=1,2$.
\end{theorem}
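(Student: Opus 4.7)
The plan is to construct the fundamental solution $\Gamma$ by the classical Levi parametrix method with the Euclidean heat kernel as parametrix, extract the sharp Gaussian constant $\tfrac14-\epsilon$ directly from the leading term of the expansion, and obtain the derivative bounds from a Schauder-type analysis of the correction term, which is where the Hölder regularity of $a$ and $b$ genuinely enters.

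Concretely, I set $Z(t,x,s,y):=(4\pi(t-s))^{-3/2}\exp(-|x-y|^{2}/(4(t-s)))$, observe $(\partial_{t}-\Delta_{x})Z\equiv 0$, and seek $\Gamma$ in the form $\Gamma=Z+Z\ast\Phi$, where $(Z\ast\Phi)(t,x,s,y):=\int_{s}^{t}\!\!\int_{\bR^{3}} Z(t,x,\tau,z)\Phi(\tau,z,s,y)\,dz\,d\tau$. Imposing $\mathcal{L}\Gamma=0$ and using the reproducing property of $Z$ reduces the problem to the Volterra equation $\Phi=K+K\ast\Phi$, with $K(t,x,s,y):=-(a(t,x)\cdot\nabla_{x}Z+b(t,x)Z)(t,x,s,y)$. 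The elementary inequality $|x-y|\exp(-|x-y|^{2}/(4(t-s)))\le C_{\epsilon}(t-s)^{1/2}\exp(-(\tfrac14-\epsilon)|x-y|^{2}/(t-s))$ then yields $|K|\le C(t-s)^{-2}\exp(-c_{\ast}|x-y|^{2}/(t-s))$ with $c_{\ast}=\tfrac14-\epsilon$ and $C$ depending on $T,\|a\|_{L^{\infty}},\|b\|_{L^{\infty}},\epsilon$.

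Next I would solve the Volterra equation by the Neumann series $\Phi=\sum_{n\ge 1}K^{\ast n}$. A direct Chapman--Kolmogorov computation, based on the Gaussian identity $\int_{\bR^{3}}\exp(-c_{\ast}|x-z|^{2}/(t-\tau)-c_{\ast}|z-y|^{2}/(\tau-s))\,dz=C((t-\tau)(\tau-s)/(t-s))^{3/2}\exp(-c_{\ast}|x-y|^{2}/(t-s))$, shows that each iteration improves the time singularity by a factor $(t-s)^{1/2}$ while preserving the constant $c_{\ast}$, and the standard beta-function bookkeeping produces an absolutely convergent series with $|\Phi|\le C(t-s)^{-2}\exp(-c_{\ast}|x-y|^{2}/(t-s))$ on $0\le s<t\le T$. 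One further convolution with $Z$ gives $|Z\ast\Phi|\le C(t-s)^{-1}\exp(-c_{\ast}|x-y|^{2}/(t-s))$, which is strictly less singular than $Z$; combined with $|Z|\le(4\pi(t-s))^{-3/2}\exp(-|x-y|^{2}/(4(t-s)))$ and the trivial bound $(t-s)^{-1}\le T^{1/2}(t-s)^{-3/2}$, this yields \eqref{gaussian011} with $c=\tfrac14-\epsilon$.

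For the derivative estimates \eqref{gaussian031} I differentiate $\Gamma=Z+Z\ast\Phi$ under the integral. The bounds on $\nabla_{x}Z$ and $\nabla_{x}^{2}Z$ are classical; for the correction term the naive estimate fails at second order because $|\nabla_{x}^{2}Z|\cdot|\Phi|$ is non-integrable in $\tau$, and the standard remedy is the add-and-subtract decomposition $\int\nabla_{x}^{2}Z(t,x,\tau,z)[\Phi(\tau,z,s,y)-\Phi(\tau,x,s,y)]\,dz+\Phi(\tau,x,s,y)\int\nabla_{x}^{2}Z(t,x,\tau,z)\,dz$, which converges once $\Phi$ is spatially Hölder continuous. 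That property is obtained by iterating the Volterra equation in the Hölder class $C^{\beta,\beta/2}$, and this is the only place where the hypothesis $a,b\in C^{\beta,\beta/2}$ is used; it accounts for the richer constant dependence $C_{i}=C_{i}(T,\|a\|_{C^{\beta,\beta/2}},\|b\|_{C^{\beta,\beta/2}})$ in the derivative bounds. The estimate on $\partial_{t}\Gamma$ then follows from the equation $\partial_{t}\Gamma=\Delta\Gamma-a\cdot\nabla\Gamma-b\Gamma$. The main technical obstacle is the careful bookkeeping of Gaussian exponents through the iteration so that no further loss occurs and the final bound lands exactly at $\tfrac14-\epsilon$, with $C_{0}\to\infty$ as $\epsilon\to 0$; a secondary remark is that the structural hypotheses $\nabla\cdot a=0$ and $b\ge 0$ are not actually needed for these pointwise bounds themselves but are invoked elsewhere in the paper to ensure $\Gamma\ge 0$ via the maximum principle.
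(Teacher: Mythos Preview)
Your proposal is correct but follows a genuinely different route from the paper.

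The paper does \emph{not} use the Levi parametrix. It first proves a more general statement (its Proposition~4.1) for divergence-form operators $\sL u=u_t-\dv(A\nabla u)+a\cdot\nabla u+bu$ with merely bounded measurable principal coefficients: the fundamental solution is constructed via averaged kernels and weak solutions following Cho--Dong--Kim, using only the energy inequality and De Giorgi--Nash--Moser local boundedness. The Gaussian bound \eqref{gaussian011} is then obtained by the Davies--Fabes--Stroock exponential method: one tracks the $L^2\to L^2$, $L^2\to L^\infty$ and (by duality) $L^1\to L^2$ norms of the twisted propagator $P^\psi_{s\to t}f=e^\psi u$, where $u$ solves $\sL u=0$ with data $e^{-\psi}f$ and $\abs{\nabla\psi}\le\gamma$, and optimizes over $\gamma$ to land at $c=\lambda/(4\Lambda^2)-\epsilon=\tfrac14-\epsilon$. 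The derivative bounds \eqref{gaussian031} are obtained not by differentiating an integral representation but by applying interior Schauder estimates to $u(\cdot)=\Gamma(\cdot,Y)$ on the cylinder $Q_r^{-}(X)$ with $r=\tfrac12\sqrt{t-s}$, and then bounding $\norm{u}_{L^\infty(Q_r^-(X))}$ via the already-established Gaussian estimate.

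Your parametrix argument is more elementary and explicit here because the principal part is exactly $-\Delta$, so the heat kernel is an exact parametrix and all correction comes from the lower-order terms; the sharp constant $\tfrac14-\epsilon$ falls out of $Z$ and is preserved through the Chapman--Kolmogorov iteration as you describe. The paper's approach, by contrast, is more robust: its Proposition~4.1 applies verbatim to rough principal coefficients, where the Levi method would require their H\"older continuity, and the derivative step via Schauder avoids the add-and-subtract bookkeeping entirely. Your closing remark that $\nabla\cdot a=0$ and $b\ge 0$ are not truly needed for the pointwise bounds is also correct; in the paper these hypotheses are used to make the energy inequality and the Davies computation clean, and the paper itself notes (Remark~1) that $b\ge 0$ can be dropped via the $e^{-\mu t}$ substitution.
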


\begin{remark}
In fact, the assumption that $b \ge 0$ in Theorem~\ref{theorem c} is not so essential.
If we set $v(x,t)=e^{-\mu t}u(x,t)$,  then $v$ satisfies $\mathcal{L} v = e^{-\mu t}(\mathcal{L} u -\mu u)$, or equivalently,
\[
\tilde{\mathcal L} v :=\mathcal{L}v + \mu v = e^{-\mu t} \mathcal{L} u.
\]
The assumption that $b \in C^{\beta,\beta/2}\left(\mathbb{R}^{3}_{T}\right)$ particularly implies that $b$ is bounded, and thus we can make $b+\mu \ge 0$ by choosing $\mu$ large enough, for example, $\mu=\norm{b}_{L^\infty(\bR^3_T)} \le \norm{b}_{C^{\beta,\beta/2}(\bR^3_T)}$.
Note that the new operator $\tilde{\mathcal L}$ satisfies  the hypothesis of Theorem~\ref{theorem c}.
Therefore, we can apply Theorem~\ref{theorem c} to $v$ and transfer the results back to $u$.
\end{remark}

This paper is organized as follows. In section 2 we recall some useful notations and known results.  Section 3 is devoted to proving Theorem \ref{uniqueness theorem}. In section 4, we discuss the fundamental solutions of some parabolic equations and provide the proof of Theorem \ref{theorem c}.

%%%%%%%%%%%%%%%
\section{Preliminaries}
%%%%%%%%%%%%%%%

%%%%%%%%%%%%%
\subsection{Notations}
%%%%%%%%%%%%%
\noindent
\textbullet \ \ All generic constants will be denoted by $C$. We write $C=C(p_{1}, p_{2}, \cdots)$ to mean a constant that depends on $(p_{1}, p_{2}, \cdots)$. We follow the convention that such constants can vary from expression to expression and even between two occurrences within the same expression. $f^{\delta}$ denotes the dependence of a function on a parameter $\delta$.

\noindent
\textbullet \ \ $L^{p}$ and $W^{k,p}$ are the usual Lebesgue spaces and the Sobolev spaces. $L^{p}_{T}L^{q}$ denotes the Banach set of Bochner measurable functions $f$ from $(0,T)$ to $L^{q}(\mathbb{R}^{3})$ such that $\|f(t)\|_{L^{q}}\in L^{p}(0,T)$.

\noindent
\textbullet \ \ For $1<s,p<\infty$ and $0<T\leq \infty$, we define a function space
\[
\mathcal{P}^{s,p}_{T}(\mathbb{R}^{3})=\left\{f\in \mathcal{D}'((0,T)\times \mathbb{R}^{3}): \partial_{t}f\in L^{s}_{T}L^{p}(\mathbb{R}^{3}), \  f\in  L^{s}_{T}W^{2,p}(\mathbb{R}^{3})\right\}
\]
with the norm
\[
\|f\|_{\mathcal{P}^{s,p}_{T}}=\left\|\partial_{t}f\right\|_{L^{s}_{T}L^{p}} +\|f\|_{L^{s}_{T}W^{2,p}}.
\]
We also use the real interpolation space for initial data (\cite{Shatah}, Chapter 3)
\[
\mathcal{I}^{s,p}_{0}(\mathbb{R}^{3})=\left(L^{p}(\mathbb{R}^{3}), W^{2,p}(\mathbb{R}^{3})\right)_{1-\frac{1}{s},s}.
\]

\noindent
\textbullet \ \ We finally introduce the H\"older space $\mathcal{C}^{\beta,\gamma}(\mathbb{R}^{3}_{T})$, $0<\beta,\gamma<1$:
\[
\mathcal{C}^{\beta,\gamma}(\mathbb{R}^{3}_{T})=\left\{f\in \mathcal{C}(\mathbb{R}^{3}_{T}): \|f\|_{C^{\beta,\gamma}(\mathbb{R}^{3}_{T})}<\infty\right\},
\]
where
\[
\|f\|_{C^{\beta,\gamma}(\mathbb{R}^{3}_{T})}=\sup_{(t,x)\in \mathbb{R}^{3}_{T}}|f(t,x)| +\sup_{(t,x), (s,y) \in \mathbb{R}^{3}_{T}, \ x\ne y, s\ne t}\frac{|f(t,x)-f(s,y)|}{|t-s|^{\gamma}+|x-y|^{\beta}}.
\]
In particular, we set $\mathcal{C}^{\beta}(\mathbb{R}^{3}_{T})=\mathcal{C}^{\beta,\beta}(\mathbb{R}^{3}_{T})$.

%%%%%%%%%%%%%%%%%%%
\subsection{Parabolic equations}
%%%%%%%%%%%%%%%%%%%
To show uniqueness of solutions of the system (\ref{KS-PE}), we use the vanishing viscosity method. This requires to solve the equation of the form
\begin{equation}\label{auxiliary problem}
\begin{split}
\partial_{t}f-(\delta+V(t,x))\Delta f+\mu f=g(t,x) &\quad \text{in $\mathbb{R}^{3}_{T}$}\\
f(0,x)=f_{0}(x) &\quad \text{in $\mathbb{R}^{3}$}.
\end{split}
\end{equation}

\begin{lemma}\cite[Lemma 3.4]{Sugiyama} \label{auxiliary lemma}
Let $1<s,p<\infty$ and $0<T<\infty$. Suppose $V$ satisfies the following conditions:
\[
V\in L^{\infty}_{T}\left(L^{1}\cap L^{\infty}\right) \bigcap C^{\beta}(\mathbb{R}^{3}_{T}), \quad V(t,x)\ge 0
\]
for some $0<\beta<1$. Then for every $g\in L^{s}_{T}L^{p}$ and $f_{0}\in \mathcal{I}^{s,p}_{0}$, there exists a unique solution $f$ of (\ref{auxiliary problem}) on $(0,T)$. Moreover, there exists
\[
\mu_{1}=\mu_{1}(\beta, \|V\|_{C^{\beta}}, \delta,s,p,T)
\]
such that
\[
\left\|\partial_{t}f\right\|_{L^{s}_{T}L^{p}}+\delta\left\|\nabla^{2}f\right\|_{L^{s}_{T}L^{p}}+\mu\left\|f\right\|_{L^{s}_{T}L^{p}} \leq C \left(\left\|g\right\|_{L^{s}_{T}L^{p}}+\left\|f_{0}\right\|_{\mathcal{I}^{s,p}_{0}}\right)
\]
for all $\mu\ge \mu_{1}$, where $C=C(\beta, \|V\|_{C^{\beta}}, \delta,s,p,T)$ is a constant independent of $\mu\ge \mu_{1}$.
\end{lemma}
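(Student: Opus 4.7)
The plan is to treat the equation as a linear, non-divergence form, uniformly parabolic equation with bounded H\"older continuous principal coefficient $a(t,x):=\delta+V(t,x)$, and to invoke standard maximal $L^{s}_{T}L^{p}$ regularity. Note that $\delta\le a(t,x)\le \delta+\|V\|_{L^\infty_T L^\infty}$ and $\|a\|_{C^{\beta}}=\|V\|_{C^{\beta}}$, since $\delta$ is constant, so the operator $\partial_{t}-a\Delta+\mu$ is uniformly parabolic with H\"older continuous coefficient of exponent $\beta$.

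First I would establish the estimate for the frozen, constant-coefficient model $\partial_{t}u - a_{0}\Delta u +\mu u = g$, $u(0,\cdot)=f_{0}$, for a fixed $a_{0}\in[\delta,\delta+\|V\|_{L^\infty}]$. After rescaling time by $1/a_{0}$ this is a shifted heat equation, whose maximal $L^{s}_{T}L^{p}$ regularity estimate
\[
\|\partial_{t}u\|_{L^{s}_{T}L^{p}}+a_{0}\|\nabla^{2}u\|_{L^{s}_{T}L^{p}}+\mu\|u\|_{L^{s}_{T}L^{p}} \le C\bigl(\|g\|_{L^{s}_{T}L^{p}}+\|f_{0}\|_{\mathcal{I}^{s,p}_{0}}\bigr),
\]
uniform in $\mu$ bounded below, is a classical consequence of the Dore--Venni theorem on UMD spaces (or, equivalently, of Mikhlin-multiplier estimates for the heat kernel derivatives combined with real interpolation for the trace space $\mathcal{I}^{s,p}_{0}=(L^{p},W^{2,p})_{1-1/s,s}$). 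The extra $\mu$-factor on the last term on the left comes from the resolvent bound $\|(\lambda+\mu-a_{0}\Delta)^{-1}\|_{L^{p}\to L^{p}}\le C/(|\lambda|+\mu)$ on the sector of analyticity.

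Next I would pass to variable coefficients by a freezing-of-coefficients argument. On a parabolic cylinder $Q_{r}(t_{0},x_{0})$, H\"older continuity of $a$ gives $\sup_{Q_{r}}|a-a(t_{0},x_{0})|\le \|V\|_{C^{\beta}}r^{\beta}$, so writing
\[
\partial_{t}f - a(t_{0},x_{0})\Delta f + \mu f = g + \bigl(a(t,x)-a(t_{0},x_{0})\bigr)\Delta f
\]
and localizing with a smooth cutoff supported in $Q_{2r}$ produces a local bound in $Q_{r}$ from the frozen model, with error term $C\|V\|_{C^{\beta}}r^{\beta}\|\nabla^{2}f\|_{L^{s}L^{p}(Q_{2r})}$ together with commutator terms involving lower-order derivatives of $f$. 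Choosing $r$ small, depending on $\beta,\|V\|_{C^{\beta}},\delta,s,p$, so that $C\|V\|_{C^{\beta}}r^{\beta}<\delta/2$, and summing the local estimates over a finite-overlap Vitali covering of $\mathbb{R}^{3}_{T}$, the second-order error is absorbed on the LHS using $a_{0}\ge \delta$; the commutator remainders, of the form $C(r)\|\nabla f\|_{L^{s}_{T}L^{p}}+C(r)\|f\|_{L^{s}_{T}L^{p}}$, are controlled by interpolation and absorbed by taking $\mu\ge \mu_{1}$ for a suitable $\mu_{1}=\mu_{1}(\beta,\|V\|_{C^{\beta}},\delta,s,p,T)$. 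This yields the stated estimate.

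Finally, for well-posedness: uniqueness follows directly from the a priori estimate applied to the difference of two solutions with $g\equiv 0$ and $f_{0}\equiv 0$. Existence is obtained by a standard approximation: mollify $V$ to $V_{\varepsilon}\in C^{\infty}\cap C^{\beta}$ (the assumption $V\in L^{\infty}_{T}(L^{1}\cap L^{\infty})$ ensures adequate decay and uniform H\"older bounds for the mollifications), solve the smoothed non-divergence equation by classical parabolic Schauder theory to obtain $f_{\varepsilon}\in C^{2+\beta,1+\beta/2}$, and pass to the limit $\varepsilon\to 0$ using the uniform a priori estimate and weak compactness in $\mathcal{P}^{s,p}_{T}$. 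The main obstacle is the bookkeeping of the freezing / covering argument and, in particular, tracking the $\delta$-dependence of the cylinder radius $r$ and hence of $\mu_{1}$; the constants blow up as $\delta\to 0$, but this is harmless because the lemma is applied with $\delta>0$ fixed before the vanishing viscosity limit is taken.
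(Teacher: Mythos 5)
The paper does not prove this lemma; it cites it verbatim from Miura--Sugiyama \cite[Lemma~3.4]{Sugiyama}. Your proposed proof is therefore a reconstruction, and in outline it is sound and follows what is essentially the standard path for maximal $L^{s}_{T}L^{p}$-regularity of non-divergence parabolic operators with uniformly continuous leading coefficients: (i) establish the estimate for the frozen, shifted heat operator $\partial_t - a_0\Delta + \mu$, tracking $\mu$-uniformity via the resolvent bound; (ii) pass to variable coefficients by localization and freezing, absorbing the $C\|V\|_{C^\beta}r^\beta\|\nabla^2 f\|$ error using $a_0\ge\delta$; (iii) absorb cutoff commutators by interpolating $\nabla f$ between $\nabla^2 f$ and $f$ and taking $\mu\ge\mu_1$; (iv) uniqueness from the a priori bound, existence by mollifying $V$ and passing to the limit. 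The only loose thread is the role of the hypothesis $V\in L^\infty_T(L^1\cap L^\infty)$. Your explanation (decay needed for the mollifications) is not really what it is for; the more likely purpose, in the covering step, is that $L^1$-decay of $V$ means $a(t,x)=\delta+V(t,x)\to\delta$ at spatial infinity, so the operator is a small perturbation of the constant-coefficient $\partial_t-\delta\Delta+\mu$ outside a compact set and only a \emph{finite} covering is needed in the bounded region. Without some decay, a Vitali covering of all of $\mathbb{R}^3_T$ with radius $r$ depending only on $\|V\|_{C^\beta}$ still works because the local estimates have uniformly bounded overlap, but the decay makes the bookkeeping and the existence argument cleaner. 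This gap is harmless; the rest of your argument stands and indeed yields $a_0\|\nabla^2 f\|\ge\delta\|\nabla^2 f\|$ on the left-hand side, which is precisely the form asserted in the lemma.
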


We next consider the Stokes system
\begin{subequations} \label{Stokes system}
\begin{align}
\partial_{t}v-\Delta v +\nabla p=F &\quad \text{in $\mathbb{R}^{3}_{T}$},\\
\nabla \cdot v=0 &\quad \text{in $\mathbb{R}^{3}_{T}$},\\
v(0,x)=v_{0}(x) &\quad \text{in $\mathbb{R}^{3}$}.
\end{align}
\end{subequations}
Let $\mathbb{P}$ be the Leray projection operator. Then, we can write $v$ as
\[
v(t,x)=e^{t\Delta}v_{0}+\int^{t}_{0}e^{(t-s)\Delta}\mathbb{P}F(s)ds,
\]
where $e^{t\Delta}$ is the heat kernel. We note that $S(t):=e^{t\Delta}\mathbb{P}$ has the following $L^{p}$ estimates: for $1\leq p\leq r\leq \infty$
\begin{equation} \label{decay rate of u}
\begin{split}
&\left\|S(t)f\right\|_{L^{r}}\leq C(p,r) t^{-\frac{3}{2}\left(\frac{1}{p}-\frac{1}{r}\right)}\|f\|_{L^{p}}, \\
&\left\|\nabla S(t) f\right\|_{L^{r}}\leq C(p,r) t^{-\frac{3}{2}\left(\frac{1}{p}-\frac{1}{r}\right)-\frac{1}{2}}\|f\|_{L^{p}}.
\end{split}
\end{equation}

Without the operator $\mathbb{P}$, we have the bounds (\ref{decay rate of u}) from the heat kernel $e^{t\Delta}$ \cite{Sugiyama}. Since the operator $\mathbb{P}$ is a singular integral operator of degree 0, it  is a bounded operator such that 
\[
\left\|\mathbb{P}f\right\|_{L^{p}}\leq C(p)\|f\|_{L^{p}}
\]
and thus we have (\ref{decay rate of u}) as well.

We also consider the equation:
\begin{equation} \label{equation of f 1}
\begin{split}
\partial_{t}f+a\cdot \nabla f+b f-\Delta f=0 \quad &\text{in $\mathbb{R}^{3}_{T}$}, \\
f(0,x)=f_{0}(x) \quad &\text{in $\mathbb{R}^{3}$}.
\end{split}
\end{equation}
Under the assumptions on $a$ and $b$ in Theorem \ref{theorem c}, (\ref{gaussian011}) and (\ref{gaussian031}) imply the following estimates: for $1\leq p\leq r\leq \infty$
\begin{equation} \label{decay rate of c}
\begin{split}
&\left\|\int_{\mathbb{R}^{3}}\Gamma(t,x,0,y) f(y)dy\right\|_{L^{r}}\leq C(p,r,a,b) t^{-\frac{3}{2}\left(\frac{1}{p}-\frac{1}{r}\right)}\|f\|_{L^{p}},\\
&\left\|\int_{\mathbb{R}^{3}}\nabla_{x}\Gamma(t,x,0,y) f(y)dy\right\|_{L^{r}}\leq C(p,r,a,b) t^{-\frac{3}{2}\left(\frac{1}{p}-\frac{1}{r}\right)-\frac{1}{2}}\|f\|_{L^{p}}.
\end{split}
\end{equation}

%%%%%%%%%%%%%%%%%%%%%%%%%%%%%%
\section{Proof of Theorem \ref{uniqueness theorem}}
%%%%%%%%%%%%%%%%%%%%%%%%%%%%%%

In order to show the uniqueness of H\"older continuous weak solutions, let $(\eta_{1}, c_{1}, v_{1}, p_{1})$ and $(\eta_{2}, c_{2}, v_{2}, p_{2})$ be two solutions of the system (\ref{KS-PE}) and let
\[
\eta=\eta_{1}-\eta_{2}, \quad c=c_{1}-c_{2}, \quad v=v_{1}-v_{2}, \quad p=p_{1}-p_{2}.
\]
Then, $(\eta, c, v, p)$ satisfies the following equations in $\mathbb{R}^{3}_{T}$:
\begin{subequations}\label{difference equations dd}
\begin{align}
\partial_{t}\eta+v_{1}\cdot \nabla \eta + v\cdot\nabla \eta_{2}-\Delta \left(\eta^{1+\alpha}_{1}-\eta^{1+\alpha}_{2}\right)+\nabla \cdot \left(\eta\nabla c_{1}-\eta_{2}\nabla c\right)=0 &\quad \text{in $\mathbb{R}^{3}_{T}$},\label{difference eta}\\
\partial_{t}c+v_{1}\cdot \nabla c+\eta_{2}c-\Delta c+c_{1}\eta + v\cdot \nabla c_{2}=0 &\quad \text{in $\mathbb{R}^{3}_{T}$},\label{difference c}\\
\partial_{t}v-\Delta v+\nabla p+\eta\nabla \phi=0 &\quad \text{in $\mathbb{R}^{3}_{T}$},\\
\nabla\cdot v=0 &\quad \text{in $\mathbb{R}^{3}_{T}$},\\
\eta(0,x)=c(0,x)=v(0,x)=0 &\quad \text{in $\mathbb{R}^{3}$}.
\end{align}
\end{subequations}

We first express $v$ in terms of $\eta$:
\eqn \label{integral of u}
v(t,x)=-\int^{t}_{0}\int_{\mathbb{R}^{3}}S(t-s,x-y)\left(\eta \nabla \phi\right)(s,y)dy ds.
\een
By Theorem \ref{theorem c}, we also express $c$ as
\begin{equation}\label{integral of c}
\begin{split}
c(t,x)=-\int^{t}_{0}\int_{\mathbb{R}^{3}}\Gamma(t,x,s,y)\left(c_{1}\eta+v\cdot \nabla c_{2}\right)(s,y)dyds,
\end{split}
\end{equation}
where $\Gamma$ is the fundamental solution in Theorem \ref{theorem c} with $a$ and $b$ replaced by $v_{1}$ and $\eta_{2}$.  So, we can reformulate the dual system of (\ref{difference equations
dd}) by the dual problem of $\eta$. Once uniqueness of $\eta$ is
proved, uniqueness of $c$ and $v$ then comes automatically.

To write the dual equation of $\eta$, let \eqn \label{eta quotient}
A^{\alpha+1}(t):=A^{\alpha+1}\left(\eta_{1}(t,x),
\eta_{2}(t,x)\right)=\frac{\eta^{\alpha+1}_{1}(t,x)-\eta^{\alpha+1}_{2}(t,x)}{\eta_{1}(t,x)-\eta_{2}(t,x)}.
\een We multiply (\ref{difference eta}) by $\Phi\in
C^{\infty}_{c}(\mathbb{R}^{3}_{T})$. Then, via the integration by
parts, we have
\begin{equation*}
\begin{split}
& \int_{\mathbb{R}^{3}}\eta(t,x) \Phi(t,x)dx + \int^{t}_{0}\int_{\mathbb{R}^{3}} \eta(\tau,x) \delta \Delta\Phi(\tau,x)dx d\tau\\
&= \int^{t}_{0}\int_{\mathbb{R}^{3}}\eta(\tau,x)\partial_{\tau}\Phi(\tau,x) dxd\tau +\int^{t}_{0}\int_{\mathbb{R}^{3}}\eta(\tau,x) \left(\delta \Delta\Phi(\tau)+A^{\alpha+1}(\tau)\Delta\Phi(\tau,x)\right)dxd\tau \\
& +\int^{t}_{0}\int_{\mathbb{R}^{3}} \eta(\tau,x) (v_{1}\cdot \nabla \Phi)(\tau,x)dx d\tau  +\int^{t}_{0}\int_{\mathbb{R}^{3}}\eta(\tau,x)\left(\nabla c_{1}\cdot \nabla \Phi\right)(\tau,x)dx d\tau\\
& +\int^{t}_{0}\int_{\mathbb{R}^{3}} \eta_{2}(\tau,x)(v\cdot \nabla \Phi)(\tau,x)dx d\tau -\int^{t}_{0}\int_{\mathbb{R}^{3}} (\eta_{2}\nabla c)(\tau,x) \nabla \Phi(\tau,x)dx d\tau\\
& =\text{$N_{1}$+$N_{2}$+$N_{3}$+$N_{4}$+$N_{5}$+$N_{6}$},
\end{split}
\end{equation*}
where we add $\delta \Delta$ because $A^{\alpha+1}(\tau)$ is degenerate. We note that $N_{1}$, $N_{2}$, $N_{3}$ and $N_{4}$ are already given in the dual form. By (\ref{integral of u}),
\begin{equation*}
\begin{split}
\text{$N_{5}$}=-\int^{t}_{0}\int_{\mathbb{R}^{3}}\eta(\tau,x) \nabla \phi(x)\cdot \left[\int^{t}_{\tau}\int_{\mathbb{R}^{3}}S(\widehat{\tau}-\tau, x-y) \left(\eta_{2}(\widehat{\tau},y)\nabla\Phi(\widehat{\tau},y)\right)dyd\widehat{\tau}\right]dx d\tau.
\end{split}
\end{equation*}
Also, by (\ref{integral of c}) we rewrite $\text{$N_{6}$}$ as
\begin{equation*}
\begin{split}
\text{$N_{6}$}& =\int^{t}_{0}\int_{\mathbb{R}^{3}} \eta(\tau,x) c_{1}(\tau,x) \left[\int^{t}_{\tau} \int_{\mathbb{R}^{3}}\nabla_{y} \Gamma\left(\widehat{\tau},y,\tau,x\right) \cdot\left(\eta_{2}(\widehat{\tau},y)\nabla\Phi(\widehat{\tau},y)\right)dyd\widehat{\tau}\right]dxd\tau\\
& +\int^{t}_{0}\int_{\mathbb{R}^{3}} v(\tau,x) \cdot \nabla c_{2}(\tau,x) \left[\int^{t}_{\tau} \int_{\mathbb{R}^{3}}\nabla_{y} \Gamma\left(\widehat{\tau},y,\tau,x\right) \cdot\left(\eta_{2}(\widehat{\tau},y)\nabla\Phi(\widehat{\tau},y)\right)dyd\widehat{\tau}\right] dxd\tau\\
&=\text{$N_{6_{1}}$}+\text{$N_{6_{2}}$}.
\end{split}
\end{equation*}
Using (\ref{integral of u}), we proceed further to rewrite $\text{$N_{6_{2}}$}$ as
\begin{equation*}
\begin{split}
\text{$N_{6_{2}}$}=-\int^{t}_{0}\int_{\mathbb{R}^{3}}&\eta(\tau,x) \nabla \phi (x)  \cdot\Big\{\int^{t}_{\tau}\int_{\mathbb{R}^{3}}S({\widehat{\tau}-\tau},x-y) \nabla c_{2}(\widehat{\tau},y)\\
& \times \Big[\int^{t}_{\widehat{\tau}}\int_{\mathbb{R}^{3}} \nabla_{z} \Gamma(\rho,z, \widehat{\tau},y) \cdot\left(\eta_{2}(\rho,z)\nabla \Phi(\rho,z)\right)dzd\rho\Big]dyd\widehat{\tau}\Big\}dx d\tau.
\end{split}
\end{equation*}

Collecting all terms together, we obtain
\eqn \label{a priori each delta}
\int_{\mathbb{R}^{3}}\eta(t,x) \Phi(t,x)dx + \delta\int_{\mathbb{R}^{3}_{t}} \eta(\tau,x) \Delta\Phi(\tau,x)dx d\tau=\int_{\mathbb{R}^{3}_{t}}\eta(\tau,x)\mathcal{D}^{\ast}(\tau,x)dx d\tau,
\een
where
\begin{equation}\label{def-D}
\begin{split}
\mathcal{D}^{\ast}(\tau,x)&= \partial_{\tau}\Phi +\delta \Delta \Phi +A^{\alpha+1}\Delta\Phi +v_{1}\cdot \nabla \Phi + \nabla c_{1}\cdot \nabla \Phi \\
&- \nabla \phi(x)\cdot \int^{t}_{\tau}\int_{\mathbb{R}^{3}}S(\widehat{\tau}-\tau, x-y) \eta_{2}(\widehat{\tau},y)\nabla\Phi(\widehat{\tau},y) dyd\widehat{\tau}\\
&+c_{1}(\tau,x) \int^{t}_{\tau} \int_{\mathbb{R}^{3}}\nabla_{y} \Gamma\left(\widehat{\tau},y,\tau,x\right) \cdot\left(\eta_{2}(\widehat{\tau},y)\nabla\Phi(\widehat{\tau},y)\right)dyd\widehat{\tau} \\
&-\nabla \phi \cdot\int^{t}_{\tau}\int_{\mathbb{R}^{3}} S({\widehat{\tau}-\tau},x-y) \nabla c_{2}(\widehat{\tau},y)\\
& \times \Big[\int^{t}_{\widehat{\tau}}\int_{\mathbb{R}^{3}} \nabla_{z} \Gamma(\rho, z,\widehat{\tau},y) \left(\eta_{2}(\rho,z)\nabla \Phi(\rho,z)\right)dzd\rho \Big]dyd\widehat{\tau}.
\end{split}
\end{equation}

At this point, we could finish the proof of Theorem  \ref{uniqueness theorem}  if we could show the followings.
\begin{enumerate}[]
\item \textbullet \ \ Find a solution $\Phi^{\delta}$ solving $\mathcal{D}^{\ast}(\tau,x)=0$ for a.e. $(\tau,x)\in \mathbb{R}^{3}_{t}$ for each $\delta>0$.
\item \textbullet \ \ $\displaystyle \lim_{\delta\rightarrow 0} \delta \int^{t}_{0}\int_{\mathbb{R}^{3}} \eta(\tau,x)\Delta\Phi^{\delta}(\tau,x)dx d\tau=0$.
\end{enumerate}
Let $\displaystyle \lim_{\delta\rightarrow 0}\Phi^{\delta}=\Phi$ in $\mathcal{P}^{s,p}_{T}$. Then, (\ref{a priori each delta}) implies that
\eqn \label{almost last}
\int_{\mathbb{R}^{3}}\eta(t,x) \Phi(t,x)dx=0.
\een
But, in order to say $\eta \equiv 0$ from  (\ref{almost last}), we should remove the time dependency in $\Phi(t,x)$.  To do this, let $\zeta^{\delta}(t-\tau,x)=\Phi^{\delta}(\tau,x)$. Then, $\Phi^{\delta}(t,x)=\zeta^{\delta}(0,x)$. Let
\[
\theta=t-\tau, \quad \widehat{\theta}=t-\widehat{\tau}, \quad \rho=t-\sigma.
\]
Then, $\mathcal{D}^{\ast}(\tau,x)=0$ is equivalent to solve the following equation for each $\delta>0$
\begin{equation}\label{omega equation}
\begin{split}
\partial_{\theta}\zeta^{\delta} &-\left(\delta \Delta +A^{\alpha+1}(t-\theta)\right)\zeta^{\delta} -v_{1}(t-\theta)\cdot \nabla \zeta^{\delta} - \nabla c_{1}(t-\theta)\cdot \nabla \zeta^{\delta} \\
&+\nabla \phi\int^{\theta}_{0}\int_{\mathbb{R}^{3}} S(\theta-\widehat{\theta}, x-y) \eta_{2}(t-\widehat{\theta},y)\nabla\zeta^{\delta}(\widehat{\theta},y)dyd\widehat{\theta}  \\
&-c_{1}(t-\tau)\int^{\theta}_{0}\int_{\mathbb{R}^{3}}\nabla_{y} \Gamma(t-\widehat{\theta}, y, t-\theta,x) \cdot\left(\eta_{2}(t-\widehat{\theta},y)\nabla\zeta^{\delta}(\widehat{\theta},y)\right)dyd\widehat{\theta}\\
&+\nabla \phi \cdot \int^{\theta}_{0}\int_{\mathbb{R}^{3}} S(\theta-\widehat{\theta},x-y)\nabla c_{2}(t-\widehat{\theta},y) \mathcal{T}(\widehat{\theta},t,y,\zeta^{\delta}) dyd\widehat{\theta}=0,
\end{split}
\end{equation}
where
\eqn \label{complicated form}
\mathcal{T}(\widehat{\theta},t,y,\zeta^{\delta})=\int^{\widehat{\theta}}_{0}\int_{\mathbb{R}^{3}} \nabla_{z} \Gamma(t-\sigma,z, t-\widehat{\theta},y) \eta_{2}(t-\sigma,z)\nabla \zeta^{\delta}(\sigma,z)dz d\sigma.
\een

\begin{lemma}\label{solution omega}
Let $\alpha>\frac{1}{8}$, $2<s,p<\infty$ and $0<t<T$. For any $\zeta_{0}\in \mathcal{I}^{s,p}_{0}$, there exists a unique solution $\zeta^{\delta}\in \mathcal{P}^{s,p}_{T}$ of (\ref{omega equation}) for each $\delta>0$. Moreover,  for a.e. $0<t<T$ and for all $\chi \in L^{2}_{t}L^{2}$,
\[
\lim_{\delta\rightarrow 0}\delta \int^{t}_{0} \int_{\mathbb{R}^{3}}\chi(\tau,x) \Delta \zeta^{\delta}(\tau,x)dxd\tau=0.
\]
\end{lemma}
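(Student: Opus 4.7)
The plan is to prove existence and uniqueness of $\zeta^\delta$ via a Banach fixed point built on the linear solvability of Lemma~\ref{auxiliary lemma}, and then prove the vanishing-viscosity limit from a uniform-in-$\delta$ energy estimate. First, set $V(\theta,x):=A^{\alpha+1}(t-\theta,x)$. Since $\eta_1,\eta_2\ge 0$ are bounded and H\"older continuous, the mean-value representation
\[
A^{\alpha+1} = (1+\alpha)\int_0^1 \bigl[\tau\eta_1 + (1-\tau)\eta_2\bigr]^\alpha\,d\tau
\]
shows $V\ge 0$, $V\in L^\infty$, and $V\in \mathcal{C}^{\beta'}(\mathbb{R}^3_T)$ for some $\beta'\in(0,\beta]$, so the hypotheses of Lemma~\ref{auxiliary lemma} are met with this choice of $V$. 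Rewrite (\ref{omega equation}) schematically as
\[
\partial_\theta\zeta^\delta - (\delta+V)\Delta\zeta^\delta + \mu\zeta^\delta = \mu\zeta^\delta + \bigl(v_1 + \nabla c_1\bigr)\cdot\nabla\zeta^\delta + \mathcal{M}[\zeta^\delta],\qquad \zeta^\delta(0)=\zeta_0,
\]
where $\mathcal{M}$ collects the three Volterra memory operators coming from the representations for $v$ and $c$. Define $\mathscr{T}:\tilde\zeta\mapsto\zeta$ by letting $\zeta$ solve the above equation with $\tilde\zeta$ on the right-hand side; by Lemma~\ref{auxiliary lemma} this is well-defined on $\mathcal{P}^{s,p}_{t_0}$, and using the Gaussian bounds \eqref{decay rate of u} and \eqref{decay rate of c} together with the $L^\infty$ bounds on $\eta_2,c_1,\nabla c_2,\nabla\phi$, each memory operator has operator norm tending to $0$ as $t_0\to 0$. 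Choosing $\mu$ large (depending on $\delta$) and $t_0$ small produces a contraction, and iteration yields a unique $\zeta^\delta\in\mathcal{P}^{s,p}_T$.

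For the vanishing limit, I derive energy estimates uniform in $\delta$. Testing (\ref{omega equation}) against $\zeta^\delta$ and using $\nabla\cdot v_1=0$ to eliminate the first drift term,
\[
\frac{1}{2}\frac{d}{d\theta}\|\zeta^\delta(\theta)\|_{L^2}^2 + \int_{\mathbb{R}^3}(A^{\alpha+1}+\delta)|\nabla\zeta^\delta|^2\,dx = R[\zeta^\delta](\theta),
\]
where $R$ collects the $\nabla c_1$ drift and the three memory contributions. Cauchy--Schwarz, Young, and the Gaussian bounds \eqref{decay rate of u}, \eqref{decay rate of c} let me absorb each term in $R$ into the positive dissipation plus $C\|\zeta^\delta\|_{L^2}^2$, so Gr\"onwall yields, uniformly in $\delta$,
\[
\sup_{0\le\theta\le t}\|\zeta^\delta(\theta)\|_{L^2}^2 + \int_0^t\!\int_{\mathbb{R}^3}(A^{\alpha+1}+\delta)|\nabla\zeta^\delta|^2\,dx\,d\theta \le C\|\zeta_0\|_{L^2}^2,
\]
and in particular $\delta^{1/2}\|\nabla\zeta^\delta\|_{L^2_tL^2}\le C$ independent of $\delta$.

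Finally, given $\chi\in L^2_tL^2$, take $\chi_n\in C_c^\infty(\mathbb{R}^3_t)$ with $\chi_n\to\chi$ in $L^2_tL^2$. For each fixed $n$, integration by parts in $x$ gives
\[
\Bigl|\delta\int_0^t\!\int_{\mathbb{R}^3}\chi_n\Delta\zeta^\delta\,dx\,d\tau\Bigr| \le \delta^{1/2}\|\nabla\chi_n\|_{L^2_tL^2}\cdot\delta^{1/2}\|\nabla\zeta^\delta\|_{L^2_tL^2}\xrightarrow{\delta\to 0}0.
\]
To handle the tail $\chi-\chi_n$, I need a uniform-in-$\delta$ bound on $\delta\|\Delta\zeta^\delta\|_{L^2_tL^2}$, which I would obtain by solving the PDE for $(\delta+A^{\alpha+1})\Delta\zeta^\delta$ in terms of $\partial_\theta\zeta^\delta$, the drift, and the memory, and then by deriving a uniform bound on $\partial_\theta\zeta^\delta$ through testing the equation against $\partial_\theta\zeta^\delta$. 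This last step is where I expect the main obstacle: the degeneracy of $A^{\alpha+1}$ combined with the nonlocality of $\mathcal{M}$ makes it delicate to absorb the cross terms, and this is precisely the point at which the hypothesis $\alpha>\tfrac{1}{8}$ is expected to enter, through an interpolation controlling $\|\nabla\zeta^\delta\|_{L^p}$ by $\|\sqrt{A^{\alpha+1}}\nabla\zeta^\delta\|_{L^2}$, $\|\zeta^\delta\|_{L^\infty_tL^2}$, and Sobolev embedding in $\mathbb{R}^3$. Once such a bound is secured, sending $\delta\to 0$ first and then $n\to\infty$ closes the argument and yields the stated limit.
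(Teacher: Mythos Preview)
Your existence argument via Banach fixed point on top of Lemma~\ref{auxiliary lemma} is essentially the paper's iteration scheme, so that part is fine.

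The vanishing-viscosity half, however, has a genuine gap rooted in the choice of test function. Testing \eqref{omega equation} against $\zeta^\delta$ is the wrong move for two reasons. First, since the diffusion is in non-divergence form $(\delta+A^{\alpha+1})\Delta\zeta^\delta$, integrating by parts against $\zeta^\delta$ produces an extra term $\int \nabla A^{\alpha+1}\cdot\nabla\zeta^\delta\,\zeta^\delta$, which you cannot control: $A^{\alpha+1}$ is only H\"older, and $\nabla\eta_i\in L^\infty$ is precisely the unavailable regularity flagged in Remark~\ref{comment-100}. Second, even if that term were handled, you would only obtain $\delta\|\nabla\zeta^\delta\|_{L^2_tL^2}^2\le C$, one derivative short of what is needed; your proposed detour through an approximation of $\chi$ still requires a uniform bound on $\delta\|\Delta\zeta^\delta\|_{L^2_tL^2}$, which you do not actually establish.

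The paper instead tests against $-\Delta\zeta^\delta$. This makes the principal term $\int(\delta+A^{\alpha+1})|\Delta\zeta^\delta|^2$ appear \emph{without} any integration by parts on the degenerate coefficient, so $\nabla A^{\alpha+1}$ never enters. One then integrates by parts \emph{once} on each right-hand-side term to move a derivative from $\Delta\zeta^\delta$ onto the coefficients, which costs $\|\nabla v_1\|_{L^\infty}$, $\|\nabla^2 c_1\|_{L^\infty}$, $\|\phi\|_{W^{2,\infty}}$, $\|c_1\|_{W^{1,\infty}}$; the three memory operators are handled by writing each as the solution $U$ of an auxiliary parabolic/Stokes problem and doing a separate energy estimate on $U$. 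Gr\"onwall then yields directly $\delta\int_0^T\!\int|\Delta\zeta^\delta|^2\le C$, and the limit follows from a single Cauchy--Schwarz against $\chi\in L^2_tL^2$ with no approximation step. Finally, the role of $\alpha>\tfrac18$ is not an interpolation in the energy argument: it is the threshold from \cite{Chung 2} guaranteeing that $\eta_i,c_i,v_i$ are H\"older enough for the quantities $\|\nabla v_1\|_{L^\infty}$, $\|\nabla^2 c_1\|_{L^\infty}$, etc.\ to be finite.
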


If Lemma \ref{solution omega} can be proved, the proof of Theorem  \ref{uniqueness theorem} can be finalized. More precisely, by the dual problem (\ref{omega equation}), $\mathcal{D}^{\ast}(\tau,x)=0$ for a.e. $(\tau,x)\in \mathbb{R}^{3}_{T}$. So,
\[
\int_{\mathbb{R}^{3}}\eta(t,x)\zeta^{\delta}_{0}(x)dx + \delta \int_{\mathbb{R}^{3}_{t}} \eta(\tau,x) \Delta\Phi^{\delta}(\tau,x)dxd\tau=0.
\]
Moreover, the second part of Lemma \ref{solution omega}, with $\displaystyle \lim_{\delta\rightarrow 0}\zeta^{\delta}_{0}=\zeta_{0}$, implies that
\[
\int_{\mathbb{R}^{3}}\eta(t,x)\zeta_{0}(x)dx=0
\]
for a.e. $t\in (0,T)$. Since $\zeta_{0}\in \mathcal{I}^{s,p}_{0}$ is arbitrary, we conclude that $\eta \equiv 0$.

%%%%%%%%%%%%%%%%%%%%%%%%%%
\subsection{Proof of Lemma \ref{solution omega}}
%%%%%%%%%%%%%%%%%%%%%%%%%%
To solve the equation (\ref{omega equation}), we first introduce the exponential factor:
\[
\zeta^{\delta}(\theta,x)=e^{\mu \theta}\psi^{\delta}(\theta,x),
\]
where $\mu$ is a constant to be determined when we apply Lemma \ref{auxiliary lemma} later. Then, $\psi^{\delta}$ satisfies
\begin{equation*}
\begin{split}
\partial_{\theta}\psi^{\delta} &-\left(\delta \Delta +A^{\alpha+1}(t-\theta)\right)\psi^{\delta} +\mu\psi^{\delta} - v_{1}(t-\theta)\cdot \nabla \psi^{\delta} - \nabla c_{1}(t-\theta)\cdot \nabla\psi^{\delta} \\
&+\nabla \phi\int^{\theta}_{0}\int_{\mathbb{R}^{3}} S(\theta-\widehat{\theta}, x-y) \eta_{2}(t-\widehat{\theta},y)e^{\mu(\widehat{\theta}-\theta)}\nabla\psi^{\delta}(\widehat{\theta},y)dyd\widehat{\theta}  \\
&-c_{1}(t-\tau)\int^{\theta}_{0}\int_{\mathbb{R}^{3}}\nabla_{y} \Gamma(t-\widehat{\theta},y,t-\theta,x) \cdot\left(\eta_{2}(t-\widehat{\theta},y) e^{\mu(\widehat{\theta}-\theta)} \nabla\psi^{\delta}(\widehat{\theta},y)\right)dyd\widehat{\theta}\\
&+\nabla \phi\cdot \int^{\theta}_{0}\int_{\mathbb{R}^{3}} S(\theta-\widehat{\theta},x-y)\nabla c_{2}(t-\widehat{\theta},y) \mathcal{T}(\widehat{\theta},t,y,\psi^{\delta})dyd\widehat{\theta}=0,
\end{split}
\end{equation*}
where $\mathcal{T}(\widehat{\theta},t,y,\psi^{\delta})$ is defined by (\ref{complicated form}) with $\zeta^{\delta}\mapsto e^{\mu(\sigma-\theta)}\psi^{\delta}$. Since the exponential factors $e^{\mu(\widehat{\theta}-\theta)}$ and $e^{\mu(\sigma-\theta)}$ are less than 1, they do not affect the arguments below. Hence, we consider the following equation, with the same notation $\psi^{\delta}$,
\begin{equation}\label{psi equation delta}
\begin{split}
\partial_{\theta}\psi^{\delta} &-\left(\delta \Delta +A^{\alpha+1}(t-\theta)\right)\psi^{\delta} +\mu\psi -v_{1}(t-\theta)\cdot \nabla \psi^{\delta} - \nabla c_{1}(t-\theta)\cdot \nabla\psi^{\delta} \\
&+\nabla \phi\int^{\theta}_{0}\int_{\mathbb{R}^{3}} S(\theta-\widehat{\theta}, x-y) \eta_{2}(t-\widehat{\theta},y) \nabla\psi^{\delta}(\widehat{\theta},y)dyd\widehat{\theta}  \\
&-c_{1}(t-\tau)\int^{\theta}_{0}\int_{\mathbb{R}^{3}}\nabla_{y} \Gamma(t-\widehat{\theta},y, t-\theta,x) \cdot\left(\eta_{2}(t-\widehat{\theta},y)  \nabla\psi^{\delta}(\widehat{\theta},y)\right)dyd\widehat{\theta}\\
&+\nabla \phi \cdot \int^{\theta}_{0}\int_{\mathbb{R}^{3}} S(\theta-\widehat{\theta},x-y)\nabla c_{2}(t-\widehat{\theta},y)\mathcal{T}(\widehat{\theta},t,y,\psi^{\delta})dyd\widehat{\theta}=0.
\end{split}
\end{equation}

So, instead of proving Lemma \ref{solution omega}, we prove the following equivalent lemma.

\begin{lemma}\label{main lemma}
Let $\alpha>\frac{1}{8}$, $2<s,p<\infty$ and $0<t<T$. For any $\psi_{0}\in \mathcal{I}^{s,p}_{0}$, there exists a unique solution $\psi^{\delta}\in \mathcal{P}^{s,p}_{T}$ of (\ref{psi equation delta}) for each $\delta>0$. Moreover, for a.e. $0<t<T$ and for all $\chi \in L^{2}_{t}L^{2}$,
\[
\lim_{\delta\rightarrow 0}\delta \int^{t}_{0} \int_{\mathbb{R}^{3}}\chi(\tau,x) \Delta \psi^{\delta}(\tau,x)dxd\tau=0.
\]
\end{lemma}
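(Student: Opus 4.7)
My plan is a Banach fixed-point construction of $\psi^\delta$ via Lemma \ref{auxiliary lemma}, followed by a uniform-in-$\delta$ energy estimate for the vanishing viscosity limit. I freeze the nonlocal and transport terms by defining $\mathcal{K}:\psi\mapsto\Psi$, where $\Psi$ solves
\begin{equation*}
\partial_\theta\Psi - \bigl(\delta+A^{\alpha+1}(t-\theta,x)\bigr)\Delta\Psi + \mu\Psi = \mathcal{R}[\psi],
\end{equation*}
with $\mathcal{R}[\psi]$ collecting the remaining terms of \eqref{psi equation delta} (the $v_1\cdot\nabla$, $\nabla c_1\cdot\nabla$ transports and the three nonlocal convolutions) with $\psi^\delta$ replaced by $\psi$. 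To apply Lemma \ref{auxiliary lemma} with $V=A^{\alpha+1}$ I must verify $V\in L^\infty_T(L^1\cap L^\infty)\cap C^\beta$: writing $A^{\alpha+1}=(\alpha+1)\int_0^1(s\eta_1+(1-s)\eta_2)^\alpha\,ds$, the $L^\infty\cap L^1$ bound is inherited from the propagated controls $\eta_i\in L^\infty$ and $\eta_i(1+|x|)\in L^1$, while the H\"older estimate follows by composition with $(\cdot)^\alpha$ applied to the H\"older continuous $\eta_i$; the hypothesis $\alpha>\tfrac18$ enters here to guarantee that \cite[Thm.~1.10]{Chung 2} provides the required H\"older exponent for $\eta_i$.

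The contraction estimate $\|\mathcal{R}[\psi_1]-\mathcal{R}[\psi_2]\|_{L^s_T L^p}\leq CT^\gamma\|\psi_1-\psi_2\|_{\mathcal{P}^{s,p}_T}$ for some $\gamma>0$ is obtained by Minkowski's inequality applied to each nonlocal integral: the factor $\nabla\psi$ convolved against $\nabla_x\Gamma$ or $\nabla S$ produces an integrable $(\theta-\hat\theta)^{-1/2}$ singularity through \eqref{decay rate of u}--\eqref{decay rate of c}, while $v_1,\eta_2,c_1,\nabla c_i,\nabla\phi$ enter as bounded multiplicative coefficients, with norms controlled by the initial-data hypotheses and standard parabolic bounds for \eqref{KS-PE}. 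The doubly nested term $\mathcal{T}$ is the most delicate, requiring successive applications of \eqref{decay rate of u} and \eqref{decay rate of c}. Banach's principle on a short interval plus iteration then produces the unique $\psi^\delta\in\mathcal{P}^{s,p}_T$ on $[0,T]$.

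For the vanishing viscosity statement, since the constants in Lemma \ref{auxiliary lemma} may degenerate as $\delta\to 0$, I would obtain an independent uniform estimate by testing \eqref{psi equation delta} against $\psi^\delta$. The diffusion contributes $\delta\|\nabla\psi^\delta\|_{L^2}^2+\int A^{\alpha+1}|\nabla\psi^\delta|^2$, the transport and nonlocal terms are absorbed via Young's inequality and the bounds from the existence step, and Gr\"onwall then yields $\sqrt\delta\,\nabla\psi^\delta$ uniformly bounded in $L^2_T L^2$. For smooth, compactly supported $\chi$ integration by parts gives
\begin{equation*}
\Bigl|\delta\!\int_0^t\!\!\int_{\mathbb{R}^3}\chi\,\Delta\psi^\delta\,dxd\tau\Bigr|
=\Bigl|\delta\!\int_0^t\!\!\int_{\mathbb{R}^3}\nabla\chi\cdot\nabla\psi^\delta\,dxd\tau\Bigr|
\leq\sqrt\delta\,\|\nabla\chi\|_{L^2_t L^2}\,\|\sqrt\delta\,\nabla\psi^\delta\|_{L^2_t L^2}\to 0.
\end{equation*}
The main obstacle I foresee is extending this to arbitrary $\chi\in L^2_t L^2$, which requires a $\delta$-uniform $L^2_t L^2$-bound on $\delta\Delta\psi^\delta$; I plan to isolate $\delta\Delta\psi^\delta$ from \eqref{psi equation delta} and estimate each of the remaining terms, with the degenerate contribution $A^{\alpha+1}\Delta\psi^\delta$ being the critical one, to be handled by interpolation between the energy estimate and the (possibly $\delta$-dependent) $L^s_T L^p$ bound from Lemma \ref{auxiliary lemma}.
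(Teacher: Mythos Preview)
Your construction of $\psi^\delta$ is essentially the paper's: a fixed-point iteration through Lemma~\ref{auxiliary lemma} with $V=A^{\alpha+1}$, estimating each forcing term in $L^s_TL^p$. The paper obtains the contraction not through a $T^\gamma$ gain but by exploiting the free parameter $\mu$: one interpolates $\|\nabla\psi\|_{L^p}\le \epsilon\|\nabla^2\psi\|_{L^p}+(4/\epsilon)\|\psi\|_{L^p}$ with $\epsilon=\delta/(2C_{**})$ and then chooses $\mu\ge 8C_{**}^2/\delta$ to absorb the zeroth-order piece. Your short-time route should also close, but note that the local transport terms $v_1\cdot\nabla\psi$, $\nabla c_1\cdot\nabla\psi$ do not come with an intrinsic $T^\gamma$ factor in $L^s_TL^p$; you would need to extract it from the trace embedding $\mathcal P^{s,p}_T\hookrightarrow C([0,T];\mathcal I^{s,p}_0)$ for differences with zero initial data.

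The real gap is in the vanishing-viscosity step. Testing \eqref{psi equation delta} against $\psi^\delta$ does \emph{not} yield the dissipation you write: integrating $-(\delta+A^{\alpha+1})\Delta\psi^\delta\cdot\psi^\delta$ by parts produces the extra term $\int(\nabla A^{\alpha+1})\,\psi^\delta\cdot\nabla\psi^\delta$, and $\nabla A^{\alpha+1}$ requires $\nabla\eta_i\in L^\infty$, which is unavailable (one only has H\"older continuity of $\eta_i$; cf.\ Remark~\ref{comment-100}). Even granting that estimate, you would only control $\sqrt\delta\,\nabla\psi^\delta$ in $L^2_tL^2$, and your proposed upgrade to a bound on $\delta\Delta\psi^\delta$ via the $L^s_TL^p$ estimate from Lemma~\ref{auxiliary lemma} cannot succeed, since that constant blows up as $\delta\to0$.

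The correct test function is $-\Delta\psi^\delta$. The diffusion term then becomes $\int(\delta+A^{\alpha+1})|\Delta\psi^\delta|^2$ \emph{without} any integration by parts, so $\nabla A^{\alpha+1}$ never appears; the time derivative gives $\tfrac12\frac{d}{d\theta}\|\nabla\psi^\delta\|_{L^2}^2$ and the $\mu$-term gives $\mu\|\nabla\psi^\delta\|_{L^2}^2$. Each remaining term has the form $\int(\text{bounded})\cdot(\text{one }\nabla\psi^\delta)\cdot\Delta\psi^\delta$, and one integration by parts (using $\nabla\cdot v_1=0$ for the first, and energy estimates for the auxiliary Stokes/parabolic problems represented by the nonlocal pieces) reduces all of them to quantities controlled by $\int_0^\theta\|\nabla\psi^\delta\|_{L^2}^2$. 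Gr\"onwall then gives $\|\nabla\psi^\delta\|_{L^\infty_tL^2}$ bounded uniformly in $\delta$, and directly $\delta\int_0^T\!\!\int|\Delta\psi^\delta|^2\,dxd\tau\le C$. Hence for any $\chi\in L^2_tL^2$,
\[
\Bigl|\delta\!\int_0^t\!\!\int\chi\,\Delta\psi^\delta\Bigr|\le \sqrt{\delta}\,\|\chi\|_{L^2_tL^2}\cdot\bigl(\delta\|\Delta\psi^\delta\|_{L^2_tL^2}^2\bigr)^{1/2}\le C\sqrt{\delta}\to0,
\]
with no density argument required.
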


The proof of Lemma \ref{main lemma} consists of two parts.

%%%%%%%%%%%%%%%%%%%%%%%%%%%%%%%%%
\subsubsection{Unique solvability of (\ref{psi equation delta})}
%%%%%%%%%%%%%%%%%%%%%%%%%%%%%%%%%
We construct a unique solution to the equation (\ref{psi equation delta}) via the iteration argument. Let $\psi_{1}=e^{\tau\Delta}\psi_{0}$ and and for $k\ge 2$,
\eqn \label{psi k equation}
\partial_{\theta}\psi^{\delta}_{k} -\left(\delta+A^{\alpha+1}(t-\theta)\right)\Delta\psi^{\delta}_{k} +\mu\psi^{\delta}_{k}=G_{k-1},
\een
where we choose $G_{1}$ by putting $\psi_{1}=e^{\tau\Delta} \psi_{0}$ in place of $\psi^{\delta}_{k-1}$ below and 
\begin{equation*}
\begin{split}
G_{k-1}&=v_{1}(t-\theta)\cdot \nabla \psi^{\delta}_{k-1} + \nabla c_{1}(t-\theta)\cdot \nabla \psi^{\delta}_{k-1} \\
&-\nabla \phi\int^{\theta}_{0}\int_{\mathbb{R}^{3}} S(\theta-\widehat{\theta}, x-y) \eta_{2}(t-\widehat{\theta},y) \nabla\psi^{\delta}_{k-1}(\widehat{\theta},y)dyd\widehat{\theta}  \\
&+c_{1}(t-\tau)\int^{\theta}_{0}\int_{\mathbb{R}^{3}}\nabla_{y} \Gamma(t-\widehat{\theta},y, t-\theta,x) \cdot\left(\eta_{2}(t-\widehat{\theta},y)  \nabla\psi^{\delta}_{k-1}(\widehat{\theta},y)\right)dyd\widehat{\theta}\\
&-\nabla \phi\cdot \int^{\theta}_{0}\int_{\mathbb{R}^{3}} S(\theta-\widehat{\theta},x-y)\nabla c_{2}(t-\widehat{\theta},y)\mathcal{T}(\widehat{\theta},t,y,\psi^{\delta}_{k-1})dyd\widehat{\theta}\\
&=\text{F}_{1}+\text{F}_{2}+\text{F}_{3}+\text{F}_{4}+\text{F}_{5}.
\end{split}
\end{equation*}

We now estimate $G_{k-1}$ in $L^{s}_{T}L^{p}$. First,
\[
\|\text{F}_{1}\|_{L^{s}_{T}L^{p}}+ \|\text{F}_{2}\|_{L^{s}_{T}L^{p}}\leq \left(\|v_{1}\|_{L^{\infty}_{T}L^{\infty}} + \left\|\nabla c_{1}\right\|_{L^{\infty}_{T}L^{\infty}}\right)\left\|\nabla \psi^{\delta}_{k-1}\right\|_{L^{s}_{T}L^{p}}.
\]

Using (\ref{decay rate of u}), we also have
\[
\|\text{F}_{3}\|_{L^{s}_{T}L^{p}}  \leq C(s,p)(1+T)^{2}\left\|\nabla \phi\right\|_{L^{\infty}}\left\|\eta_{2}\right\|_{L^{\infty}_{T}L^{\infty}} \left\|\nabla \psi^{\delta}_{k-1}\right\|_{L^{s}_{T}L^{p}}.
\]

By (\ref{decay rate of c}),
\[
 \|\text{F}_{4}\|_{L^{p}}\leq C_{1}\|c_{1}\|_{L^{\infty}_{T}L^{\infty}}\|\eta_{1}\|_{L^{\infty}_{T}L^{\infty}}\int^{\theta}_{0}\frac{1}{\sqrt{\theta-\widehat{\theta}}}\left\|\nabla \psi^{\delta}_{k-1}(\widehat{\theta})\right\|_{L^{p}} d\widehat{\theta}
\]
and hence we have, with $s>2$, 
\[
\|\text{F}_{4}\|_{L^{s}_{T}L^{p}} \leq C_{1}(1+T)^{2}\|c_{1}\|_{L^{\infty}_{T}L^{\infty}}\|\eta_{2}\|_{L^{\infty}_{T}L^{\infty}} \left\|\nabla \psi^{\delta}_{k-1}\right\|_{L^{s}_{T}L^{p}},
\]
where $C_{1}$ is the constant defined in Theorem \ref{theorem c}.

Similarly,
\[
\|\text{F}_{5}\|_{L^{s}_{T}L^{p}} \leq C_{1}(1+T)^{4}\|\nabla \phi\|_{L^{\infty}} \|\nabla c_{2}\|_{L^{\infty}_{T}L^{\infty}}\|\eta_{2}\|_{L^{\infty}_{T}L^{\infty}} \left\|\nabla \psi^{\delta}_{k-1}\right\|_{L^{s}_{T}L^{p}}.
\]

By Lemma \ref{auxiliary lemma},
\begin{equation*}
\begin{split}
\left\|\partial_{t}\psi^{\delta}_{k}\right\|_{L^{s}_{T}L^{p}}+\delta\left\|\nabla^{2}\psi^{\delta}_{k}\right\|_{L^{s}_{T}L^{p}}+\mu\left\|\psi^{\delta}_{k}\right\|_{L^{s}_{T}L^{p}}  \leq C_{\ast}\left\|\psi_{0}\right\|_{\mathcal{I}^{s,p}_{0}} +C_{\ast\ast}\left\|\nabla \psi^{\delta}_{k-1}\right\|_{L^{s}_{T}L^{p}},
\end{split}
\end{equation*}
where
\begin{equation*}
\begin{split}
& \mu\ge \mu_{1}(\beta, \|\eta_{1}\|_{C^{\beta}}, \|\eta_{2}\|_{C^{\beta}}, \delta,s,p,T),\quad  C_{\ast}=C(\beta, \|\eta_{1}\|_{C^{\beta}(\mathbb{R}^{3}_{T})}, \|\eta_{2}\|_{C^{\beta}(\mathbb{R}^{3}_{T})}, \delta,s,p,T),\\
& C_{\ast\ast}=C\left(C_{\ast}, \left\|\nabla \phi\right\|_{L^{\infty}}, \|v_{1}\|_{L^{\infty}_{T}L^{\infty}}, \|c_{1}\|_{L^{\infty}_{T}W^{1,\infty}}, \|\nabla c_{2}\|_{L^{\infty}_{T}L^{\infty}}, \left\|\eta_{2}\right\|_{L^{\infty}_{T}L^{\infty}}, \left\|\eta_{1}\right\|_{L^{\infty}_{T}L^{\infty}}, T\right).
\end{split}
\end{equation*}
Using the following interpolation
\[
\left\|\nabla \psi^{\delta}_{k-1}\right\|_{L^{p}}\leq \epsilon \left\|\nabla^{2}\psi^{\delta}_{k-1}\right\|_{L^{p}}+\frac{4}{\epsilon}\|\psi^{\delta}_{k-1}\|_{L^{p}}, \quad \epsilon=\frac{\delta}{2C_{\ast\ast}},
\]
we have
\begin{equation*}
\begin{split}
\left\|\partial_{t}\psi^{\delta}_{k}\right\|_{L^{s}_{T}L^{p}}+\delta\left\|\nabla^{2}\psi^{\delta}_{k}\right\|_{L^{s}_{T}L^{p}}+\mu\left\|\psi^{\delta}_{k}\right\|_{L^{s}_{T}L^{p}} \leq C_{\ast}\left\|\psi_{0}\right\|+\frac{\delta}{2}\left\|\nabla^{2}\psi^{\delta}_{k-1}\right\|_{L^{s}_{T}L^{p}} + \frac{8C^{2}_{\ast\ast}}{\delta}\left\|\psi^{\delta}_{k-1}\right\|_{L^{s}_{T}L^{p}}.
\end{split}
\end{equation*}

Let
\[
\mu\ge \max\left\{\mu_{1}, \frac{8C^{2}_{\ast\ast}}{\delta}\right\}.
\]
Then, we finally obtain
\begin{equation}\label{iterated a priori bound}
\begin{split}
&\left\|\partial_{t}\psi^{\delta}_{k}\right\|_{L^{s}_{T}L^{p}}+\delta\left\|\nabla^{2}\psi^{\delta}_{k}\right\|_{L^{s}_{T}L^{p}}+\mu\left\|\psi^{\delta}_{k}\right\|_{L^{s}_{t}L^{p}} \\
& \leq C_{\ast}\left\|\psi_{0}\right\|_{\mathcal{I}^{s,p}_{0}} +\frac{1}{2}\left(\left\|\partial_{t}\psi^{\delta}_{k-1}\right\|_{L^{s}_{T}L^{p}}+\delta\left\|\nabla^{2}\psi^{\delta}_{k-1}\right\|_{L^{s}_{T}L^{p}}+\mu\left\|\psi^{\delta}_{k-1}\right\|_{L^{s}_{T}L^{p}}\right).
\end{split}
\end{equation}

Let
\[
M(T)=\left\|\psi_{0}\right\|_{\mathcal{I}^{s,p}_{0}}\sup_{0\leq t\leq T}C_{\ast}.
\]
Then, (\ref{iterated a priori bound}) implies that the sequence $\{\psi_{k}\}$ is uniformly in $\mathcal{P}^{s,p}_{T}$, bounded above by $2M(T)$. Moreover, using the same argument, we can derive that
\begin{equation}\label{iterated a priori bound difference}
\begin{split}
&\left\|\partial_{t}\left(\psi^{\delta}_{k+1}-\psi^{\delta}_{k}\right)\right\|_{L^{s}_{T}L^{p}}+\delta\left\|\nabla^{2}\left(\psi^{\delta}_{k+1}-\psi^{\delta}_{k}\right)\right\|_{L^{s}_{T}L^{p}}+\mu\left\|\psi^{\delta}_{k+1}-\psi^{\delta}_{k}\right\|_{L^{s}_{T}L^{p}} \\
& \leq \frac{1}{2}\left(\left\|\partial_{t}\left(\psi^{\delta}_{k}-\psi^{\delta}_{k-1}\right)\right\|_{L^{s}_{T}L^{p}}+\delta\left\|\nabla^{2}\left(\psi^{\delta}_{k}-\psi^{\delta}_{k-1}\right)\right\|_{L^{s}_{T}L^{p}}+\mu\left\|\psi^{\delta}_{k}-\psi^{\delta}_{k-1}\right\|_{L^{s}_{T}L^{p}}\right).
\end{split}
\end{equation}
Hence $\left\{\psi^{\delta}_{k}\right\}$ is a Cauchy sequence in $\mathcal{P}^{s,p}_{T}$. Therefore, there exists a unique $\psi^{\delta}\in \mathcal{P}^{s,p}_{T}$ such that
\[
\lim_{k\rightarrow \infty}\left\|\psi^{\delta}_{k}-\psi^{\delta}\right\|_{\mathcal{P}^{s,p}_{T}}=0.
\]
By taking $k\rightarrow \infty$ to (\ref{psi k equation}), we obtain a unique solution $\psi^{\delta}$of (\ref{psi equation delta}) for each $\delta>0$.

%%%%%%%%%%%%%%%%%%%%%%%%%%%%%%%%%
\subsubsection{Vanishing viscosity of (\ref{psi equation delta})}
%%%%%%%%%%%%%%%%%%%%%%%%%%%%%%%%%
We now use the vanishing viscosity limit to (\ref{psi equation delta}). We multiply the equation (\ref{psi equation delta}) by $-\Delta \psi^{\delta}$ and integrate it over $\mathbb{R}^{3}$. Then, for $0<\theta<t$
\begin{equation}\label{a priori vanishing viscosity}
\begin{split}
&\frac{1}{2}\frac{d}{d\theta}\left\|\nabla \psi^{\delta}(\theta)\right\|^{2}_{L^{2}}+\mu \left\|\nabla \psi^{\delta}\right\|^{2}_{L^{2}}+\int\left(\delta+ A^{\alpha+1}(t-\theta)\right)\left|\Delta \psi^{\delta}\right|^{2}dx\\
&=\int_{\mathbb{R}^{3}} \left(v_{1}(t-\theta)\cdot \nabla \psi^{\delta}\right)\Delta \psi^{\delta} dx+\int \left(\nabla c_{1}(t-\theta)\cdot \nabla \psi^{\delta}\right)\Delta \psi^{\delta} dx\\
&-\int_{\mathbb{R}^{3}} \left(\nabla \phi \cdot\int^{\theta}_{0}\int_{\mathbb{R}^{3}} S(\theta-\widehat{\theta}, x-y) \eta_{2}(t-\widehat{\theta},y) \nabla\psi^{\delta}(\widehat{\theta},y)dyd\widehat{\theta}\right)\Delta \psi^{\delta} dx\\
&+\int_{\mathbb{R}^{3}} \left(c_{1}(t-\tau)\int^{\theta}_{0}\int_{\mathbb{R}^{3}}\nabla_{y} \Gamma(t-\widehat{\theta},y,t-\theta,x) \cdot\left(\eta_{2}(t-\widehat{\theta},y)  \nabla\psi^{\delta}(\widehat{\theta},y)\right)dyd\widehat{\theta}\right)\Delta \psi^{\delta} dx\\
&-\int_{\mathbb{R}^{3}} \left(\nabla \phi\cdot \int^{\theta}_{0}\int_{\mathbb{R}^{3}} S(\theta-\widehat{\theta},x-y)\nabla c_{2}(t-\widehat{\theta},y)\mathcal{T}(\widehat{\theta},t,y,\psi^{\delta})dyd\widehat{\theta}\right)\Delta \psi^{\delta}  dx\\
&=\text{I+II+III+IV+V}.
\end{split}
\end{equation}

\noindent
\textbullet \ \ By the divergence free condition of $v_{1}$, we have
\[
\text{I}\leq \left\|\nabla v_{1}(t-\theta)\right\|_{L^{\infty}} \left\|\nabla \psi^{\delta}\right\|^{2}_{L^{2}}.
\]

\noindent
\textbullet \ \ By the integration by parts,
\[
\text{II} \leq \left(\left\|\nabla^{2} c_{1}(t-\theta)\right\|_{L^{\infty}}+\frac{1}{2}\left\|\Delta c_{1}(t-\theta)\right\|_{L^{\infty}}\right) \left\|\nabla \psi^{\delta}\right\|^{2}_{L^{2}}.
\]

\noindent
\textbullet \ \ Also, by the integration by parts
\[
\text{III} \leq \left\|\phi\right\|^{2}_{W^{2,\infty}}\left\|\nabla \psi^{\delta}\right\|^{2}_{L^{2}} +\left\|\int^{\theta}_{0}\int_{\mathbb{R}^{3}} S(\theta-\widehat{\theta}, x-y) \eta_{2}(t-\widehat{\theta},y) \nabla\psi^{\delta}(\widehat{\theta},y)dyd\widehat{\theta}\right\|^{2}_{H^{1}}.
\]

Let
\[
U(\theta,x)=\int^{\theta}_{0}\int_{\mathbb{R}^{3}} S(\theta-\widehat{\theta}, x-y) \eta_{2}(t-\widehat{\theta},y) \nabla\psi^{\delta}(\widehat{\theta},y)dyd\widehat{\theta}.
\]
Then, $U$ satisfies the following system:
\begin{equation*}
\begin{split}
\partial_{\theta}U-\Delta U+\nabla p_{1}=\eta_{2}(t-\theta)\nabla\psi^{\delta} &\quad \text{in $\mathbb{R}^{3}_{T}$}, \\
\nabla\cdot U=0 &\quad \text{in $\mathbb{R}^{3}_{T}$},\\
U(0,x)=0 &\quad \text{in $\mathbb{R}^{3}$}.
\end{split}
\end{equation*}
for some scalar function $p_{1}$. Thus,
\begin{equation*}
\begin{split}
\frac{1}{2}\frac{d}{d\theta}\|U\|^{2}_{L^{2}}+\left\|\nabla U\right\|^{2}_{L^{2}} &\leq \|U\|_{L^{6}}\left\|\eta_{2}(t-\theta)\right\|_{L^{3}}\left\|\nabla \psi^{\delta}\right\|_{L^{2}}\\
&\leq \frac{1}{2}\left\|\nabla U\right\|^{2}_{L^{2}}+C\left\|\eta_{2}(t-\theta)\right\|^{2}_{L^{3}}\left\|\nabla\psi^{\delta}\right\|^{2}_{L^{2}}
\end{split}
\end{equation*}
and hence
\[
\frac{d}{d\theta}\|U\|^{2}_{L^{2}}+\left\|\nabla U\right\|^{2}_{L^{2}} \leq C\left\|\eta_{2}(t-\theta)\right\|^{2}_{L^{3}}\left\|\nabla\psi^{\delta}\right\|^{2}_{L^{2}}.
\]
This implies that
\begin{equation*}
\begin{split}
\int^{\theta}_{0}\text{III}d\widehat{\theta}\leq \left(\left\|\phi\right\|^{2}_{W^{2,\infty}} +C(1+t^{2})\left\|\eta_{2}\right\|^{2}_{L^{\infty}_{t}L^{3}} \right) \int^{\theta}_{0}\left\|\nabla \psi^{\delta}(\widehat{\theta})\right\|^{2}_{L^{2}}d\widehat{\theta}.
\end{split}
\end{equation*}

\noindent
\textbullet \ \ In a similar way to $\text{III}$, we estimate $\text{IV}$. First,
\begin{equation*}
\begin{split}
\text{IV} &\leq \left\|c_{1}(t-\theta)\right\|^{2}_{W^{1,\infty}}\left\|\nabla \psi^{\delta}\right\|^{2}_{L^{2}}  \\
&+ \left\|\int^{\theta}_{0}\int_{\mathbb{R}^{3}}\nabla_{y} \Gamma(t-\widehat{\theta},y,t-\theta,x) \cdot\left(\eta_{2}(t-\widehat{\theta},y)  \nabla\psi^{\delta}(\widehat{\theta},y)\right)dyd\widehat{\theta}\right\|^{2}_{H^{1}}
\end{split}
\end{equation*}
Let
\[
U(\theta,x)=\int^{\theta}_{0}\int_{\mathbb{R}^{3}}\nabla \Gamma(t-\widehat{\theta},y,t-\theta,x) \cdot\left(\eta_{2}(t-\widehat{\theta},y)  \nabla\psi^{\delta}(\widehat{\theta},y)\right)dyd\widehat{\theta}.
\]
After integration by parts, $U$ satisfies the following equation
\begin{equation*}
\begin{split}
\partial_{\theta}U+v_{1}\cdot \nabla U +\eta_{2}U-\Delta U=\nabla \cdot \left(\eta_{2}(t-\theta)\nabla \psi^{\delta}\right) &\quad \text{in $\mathbb{R}^{3}_{T}$}, \\
U(0,x)=0 &\quad \text{in $\mathbb{R}^{3}$}.
\end{split}
\end{equation*}
Then, using the divergence-free condition of $v_{1}$ and the sign condition of $\eta_{2}$,
\begin{equation*}
\begin{split}
\frac{1}{2}\frac{d}{d\theta}\|U\|^{2}_{L^{2}}+\left\|\nabla U\right\|^{2}_{L^{2}} &\leq \|\nabla U\|_{L^{2}}\left\|\eta_{2}(t-\theta)\right\|_{L^{\infty}}\left\|\nabla \psi^{\delta}\right\|_{L^{2}}\\
&\leq \frac{1}{2}\left\|\nabla U\right\|^{2}_{L^{2}}+C\left\|\eta_{2}(t-\theta)\right\|^{2}_{L^{\infty}}\left\|\nabla\psi^{\delta}\right\|^{2}_{L^{2}}
\end{split}
\end{equation*}
and hence
\[
\|U\|^{2}_{L^{\infty}_{\theta}L^{2}}+\left\|\nabla U\right\|^{2}_{L^{2}_{\theta}L^{2}}\leq C \left\|\eta_{2}\right\|^{2}_{L^{\infty}_{t}L^{\infty}} \left\|\nabla \psi^{\delta}\right\|^{2}_{L^{2}_{\theta}L^{2}}.
\]
So, we obtain
\begin{equation*}
\begin{split}
\int^{\tau}_{0}\text{IV}d\widehat{\theta} \leq \left(\left\|c_{1}\right\|^{2}_{L^{\infty}_{t}W^{1,\infty}} +(1+t^{2})\left\|\eta_{2}\right\|^{2}_{L^{\infty}_{t}L^{\infty}}\right)\int^{\theta}_{0}\left\|\nabla \psi^{\delta}(\widehat{\theta})\right\|^{2}_{L^{2}}d\widehat{\theta}.
\end{split}
\end{equation*}

\noindent
\textbullet \ \ We finally estimate $\text{V}$:
\[
\text{V} \leq \left\|\phi\right\|^{2}_{W^{2,\infty}} \left\|\nabla \psi^{\delta}\right\|^{2}_{L^{2}} + \left\|\int^{\theta}_{0}\int_{\mathbb{R}^{3}} S(\theta-\widehat{\theta},x-y)\nabla c_{2}(t-\widehat{\theta},y)\mathcal{T}(\widehat{\theta},t,y,\psi^{\delta})dyd\widehat{\theta} \right\|^{2}_{H^{1}}.
\]
Let
\[
U(\theta,x)=\int^{\theta}_{0}\int_{\mathbb{R}^{3}} S(\theta-\widehat{\theta},x-y)\nabla c_{2}(t-\widehat{\theta},y)\mathcal{T}(\widehat{\theta},t,y,\psi^{\delta})dyd\widehat{\theta}.
\]
Then, $U$ satisfies the following system:
\begin{equation*}
\begin{split}
\partial_{\theta}U-\Delta U+\nabla p_{2}=\nabla c_{2}(t-\theta)\mathcal{T}(\theta,t,y,\psi^{\delta}) &\quad \text{in $\mathbb{R}^{3}_{T}$}, \\
\nabla\cdot U=0 &\quad \text{in $\mathbb{R}^{3}_{T}$},\\
U(0,x)=0 &\quad \text{in $\mathbb{R}^{3}$}
\end{split}
\end{equation*}
for some scalar function $p_{2}$. Then,
\begin{equation*}
\begin{split}
\frac{1}{2}\frac{d}{d\theta}\|U\|^{2}_{L^{2}}+\left\|\nabla U\right\|^{2}_{L^{2}}&\leq \|U\|_{L^{6}}\left\|\nabla c_{2}(t-\theta)\right\|_{L^{3}} \left\|\mathcal{T}(\theta,t,y,\psi^{\delta})\right\|_{L^{2}}\\
&\leq \frac{1}{2}\left\|\nabla U\right\|^{2}_{L^{2}}+2\left\|\nabla c_{2}(t-\theta)\right\|^{2}_{L^{3}}\left\|\mathcal{T}(\theta,t,y,\psi^{\delta})\right\|^{2}_{L^{2}}\\
&\leq \frac{1}{2}\left\|\nabla U\right\|^{2}_{L^{2}}+C\left\|\nabla
c_{2}(t-\theta)\right\|^{2}_{L^{3}}
\left\|\eta_{2}\right\|^{2}_{L^{\infty}_{t}L^{\infty}}
\int^{\theta}_{0} \left\|\nabla
\psi^{\delta}(\widehat{\theta})\right\|^{2}_{L^{2}}d\widehat{\theta},
\end{split}
\end{equation*}
which implies that
\[
\|U\|^{2}_{L^{\infty}_{\theta}L^{2}}+\left\|\nabla U\right\|^{2}_{L^{2}_{\theta}L^{2}}\leq C \theta^{2}\left\|\nabla c_{2}\right\|^{2}_{L^{\infty}_{t}L^{3}} \left\|\eta_{2}\right\|^{2}_{L^{\infty}_{t}L^{\infty}} \left\|\nabla \psi^{\delta}\right\|^{2}_{L^{2}_{\theta}L^{2}}.
\]
Therefore, we obtain
\[
\int^{\theta}_{0} \text{V}d\widehat{\theta}  \leq \left(\left\|\phi\right\|^{2}_{W^{2,\infty}} +C(1+t^{2})\left\|\nabla c_{2}\right\|^{2}_{L^{\infty}_{t}L^{3}} \left\|\eta_{2}\right\|^{2}_{L^{\infty}_{t}L^{\infty}} \right) \int^{\theta}_{0}\left\|\nabla \psi^{\delta}(\widehat{\theta})\right\|^{2}_{L^{2}}d\widehat{\theta}.
\]

\noindent
\textbullet \ \ Collecting all terms together, we have
\begin{equation*}
\begin{split}
& \left\|\nabla \psi^{\delta}(t)\right\|^{2}_{L^{2}}+\mu\left\|\nabla \psi^{\delta}\right\|^{2}_{L^{2}_{t}L^{2}}+ \int^{t}_{0}\int\left(\delta+ A^{\alpha+1}(t-\tau)\right)\left|\Delta \psi^{\delta}\right|^{2}dxd\tau \\
&\leq \left\|\nabla \psi_{0}\right\|^{2}_{L^{2}}+\int^{t}_{0}\mathcal{J}(t)\left\|\nabla \psi^{\delta}(\tau)\right\|^{2}_{L^{2}}d\tau,
\end{split}
\end{equation*}
where
\begin{equation*}
\begin{split}
\mathcal{J}(t)&=\left\|\phi\right\|^{2}_{W^{2,\infty}}+ \left\|\nabla v_{1}\right\|_{L^{\infty}_{t}L^{\infty}}+ \left\|\nabla^{2} c_{1}\right\|_{L^{\infty}_{t}L^{\infty}}+\left\|\Delta c_{1}\right\|_{L^{\infty}_{t}L^{\infty}}\\
& +C(1+t^{2})\left\|\eta_{2}\right\|^{2}_{L^{\infty}_{t}(L^{3}\cap L^{\infty})}+\left\|c_{1}\right\|^{2}_{L^{\infty}_{t}W^{1,\infty}}+  C(1+t^{2})\left\|\nabla c_{2}\right\|^{2}_{L^{\infty}_{t}L^{3}} \left\|\eta_{2}\right\|^{2}_{L^{\infty}_{t}L^{\infty}}.
\end{split}
\end{equation*}

We note that H\"older regularities in \cite{Chung 2} are enough to control $\mathcal{J}(t)$. Hence, we derive the following a prior estimate by Gronwall's inequality
\[
\sup_{0\leq t\leq T}\left\|\nabla \psi^{\delta}(t)\right\|^{2}_{L^{2}}\leq\left\|\nabla \psi_{0}\right\|^{2}_{L^{2}}e^{2T\mathcal{J}(T)}.
\]
Therefore,
\[
\sup_{0\leq t\leq T}\left\|\nabla \psi^{\delta}(t)\right\|^{2}_{L^{2}}\leq C(T, \mathcal{J}(T))\left\|\nabla \psi_{0}\right\|^{2}_{L^{2}}
\]
and thus
\[
\delta \int^{T}_{0}\left|\Delta \psi^{\delta}\right|^{2}dxd\tau\leq C(T,\mathcal{J}(T))\left\|\nabla \psi_{0}\right\|^{2}_{L^{2}}.
\]
This implies that
\[
\lim_{\delta\rightarrow 0}\delta \int^{t}_{0} \int_{\mathbb{R}^{3}}\chi(\tau,x) \Delta \psi^{\delta}(\tau,x)dxd\tau=0
\]
for a.e. $0<t<T$ and for all $\chi\in L^{2}_{t}L^{2}$. This completes the proof of Lemma \ref{main lemma}.

\begin{remark}\label{comment-100}
The vanishing viscosity argument does not work with $q>1$ of the system \eqref{KS}. Indeed, when we estimate the right-hand side of (\ref{a priori vanishing viscosity}), we need to perform the integration by parts to move one derivative in $\Delta \zeta$ to the other terms in $\text{II}$. If $q>1$, $A^{1+\alpha}$ appears in $\text{II}$ and it requires that $\eta\in W^{1,\infty}(\mathbb{R}^{3}_{T})$, which is beyond the regularity in \cite{Chung 2}. Developing new methods dealing with \eqref{KS} for the case $q>1$ and possibly other equations, having solutions but no uniqueness, will be some of our next study.
\end{remark}

%%%%%%%%%%%%%%%%%%%%%%%%%%%%%
\section{Fundamental solution of a parabolic equation}
%%%%%%%%%%%%%%%%%%%%%%%%%%%%%
We here present the proof of Theorem \ref{theorem c}. In fact, Theorem \ref{theorem c} is a corollary of a more general statement, Proposition \ref{thm_fs01}. In this section, we shall denote by $\sL$ a parabolic operator on $\bR^n_T=(0,T) \times \bR^n$ ($n \ge 1$) defined by
\[
\sL u = u_t - \dv(A \nabla u) + a \cdot \nabla u + b u.
\]
The coefficients $A=A(t,x)$ is an $n \times n$ (not necessarily symmetric) matrix with entries $a^{ij}(t,x)$ that is uniformly parabolic and bounded. More precisely, we assume
\begin{equation}            \label{ellipticity}
\lambda \abs{\xi}^2 \le A(t,x) \xi \cdot \xi, \quad \Abs{ A(t,x)\xi \cdot \eta} \le \Lambda \abs{\xi} \abs{\eta},\qquad \forall \xi, \eta \in \bR^n,\;\;\forall (t,x) \in \bR^n_T
\end{equation}
for some positive constants $\lambda$ and $\Lambda$.
We assume that $a=(a^1,\ldots, a^n)$ is a divergence-free vector field. Then the adjoint operator $\sL^\ast$ of $\sL$ is given by
\[
\sL^\ast u = -u_t - \dv\left(A\tran \nabla u\right) - a \cdot \nabla u + b u.
\]

We define the parabolic distance between the points $X=(t,x)$ and $Y=(s,y)$ in $\bR^{n+1}$ as
\[
\abs{X-Y}_p:=\max(\sqrt{\abs{t-s}},\abs{x-y}).
\]
We use the following notations for basic cylinders in $\bR^{n+1}$:
\begin{align*}
Q^-_r(X)&=(t-r^2,t)\times B_r(x),\\
Q^+_r(X)&=(t,t+r^2)\times B_r(x),\\
Q_r(X)&=(t-r^2,t+r^2)\times B_r(x).
\end{align*}

\begin{proposition}\label{thm_fs01}
Let $a=(a^1,\ldots, a^n)$ be a divergence-free vector field and $b$ be a nonnegative function on $\bR^n_T:=(0,T) \times \bR^n$. Assume that $a$ and $b$ are bounded on $\bR^n_T$. Then, there exists a unique fundamental solution $\Gamma(t,x,s,y)$ of $\sL$ on $\bR^n_T$ which satisfies the following pointwise bound:
\begin{equation}\label{gaussian01}
\abs{\Gamma(t,x,s,y)} \le C (t-s)^{-\frac{n}{2}} \exp\left(-c\frac{\abs{x-y}^2}{t-s}\right),\quad (0< s <t < T),
\end{equation}
where $c=\frac{\lambda}{4\Lambda^2}-\epsilon$ for any $0<\epsilon< \frac{\lambda}{4\Lambda^2}$ and $C=C(n, \lambda, \Lambda, T, \norm{a}_{L^\infty(\bR^n_T)} ,\norm{b}_{L^\infty(\bR^n_T)}, \epsilon)$.
\end{proposition}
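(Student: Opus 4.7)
The plan is to follow the classical Aronson--Davies route. First, I would construct $\Gamma$ by regularization: mollify $a$ and $b$ to smooth coefficients $a^\varepsilon$, $b^\varepsilon$ (still divergence-free and nonnegative, respectively, and uniformly bounded in $L^\infty$), invoke classical existence theory (since the leading coefficient is merely bounded measurable, one may instead approximate $A$ by smooth matrices and then relax) to obtain a fundamental solution $\Gamma^\varepsilon$ for the regularized operator $\sL^\varepsilon$, and then prove the Gaussian bound uniformly in $\varepsilon$ before passing to the limit.

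The core estimate is a weighted $L^2$ bound via Davies' exponential perturbation method. For $u$ a solution of $\sL^\varepsilon u = 0$, testing with $u$ and using $\dv a^\varepsilon = 0$ together with $b^\varepsilon \ge 0$ gives the dissipative identity
\[
\tfrac12 \tfrac{d}{dt} \|u\|_{L^2}^2 + \int A \nabla u \cdot \nabla u\,dx + \int b^\varepsilon u^2\,dx = 0,
\]
so $L^2$ contractivity holds. Combined with the Nash inequality this yields ultracontractivity $\|T(t,s)\|_{L^1 \to L^\infty} \le C(t-s)^{-n/2}$, equivalently the on-diagonal bound $\Gamma^\varepsilon(t,x,s,y) \le C(t-s)^{-n/2}$.

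To upgrade to the off-diagonal Gaussian bound with the sharp exponent, I would conjugate by $e^{\gamma \psi}$ where $\psi$ is a bounded smooth function with $|\nabla \psi| \le 1$. Then $w := e^{\gamma\psi} u$ solves a modified divergence-form equation whose $L^2$ energy estimate, after Young's inequality is used to absorb the cross-term $\gamma(A \nabla \psi)\cdot \nabla w$ into the ellipticity contribution $\lambda \|\nabla w\|_{L^2}^2$, produces a growth factor of order $\exp\bigl((\Lambda^2/\lambda + \varepsilon')\gamma^2 (t-s)\bigr)$ for arbitrarily small $\varepsilon' > 0$. The conjugation preserves ultracontractivity (with the same exponential correction, via Nash applied to $w$), giving
\[
\Gamma^\varepsilon(t,x,s,y) \le C(t-s)^{-n/2} \exp\!\Bigl((\tfrac{\Lambda^2}{\lambda} + \varepsilon')\gamma^2(t-s) - \gamma(\psi(x) - \psi(y))\Bigr).
\]
Taking $\psi$ to be a smooth approximation of $|\,\cdot\, - y|$ and optimizing $\gamma = \tfrac{\lambda(\psi(x)-\psi(y))}{2\Lambda^2 (t-s)}$ yields the stated bound with $c = \tfrac{\lambda}{4\Lambda^2} - \epsilon$.

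Finally, I would pass to the $\varepsilon \to 0$ limit. The uniform Gaussian bound, together with De Giorgi--Nash--Moser local H\"older estimates for divergence-form parabolic equations with bounded lower-order terms, gives uniform local H\"older continuity of $\Gamma^\varepsilon$, hence a subsequential limit $\Gamma$ satisfying the same bound; identification as a fundamental solution of $\sL$ is routine. Uniqueness of the fundamental solution within the Tychonoff class of exponentially bounded solutions follows from a standard duality argument using the adjoint $\sL^\ast$ (well-posed since $-a$ is still divergence-free and $b \ge 0$). The principal technical hurdle is managing the Davies conjugation so that the sharp constant $\lambda/(4\Lambda^2)$ emerges with only an arbitrarily small loss $\epsilon$; this requires calibrating the Young's-inequality parameters so that the bad cross-terms from $\gamma A \nabla \psi$ and from $\gamma(a^\varepsilon \cdot \nabla \psi) w$ are absorbed without inflating the leading quadratic coefficient above $\Lambda^2/\lambda$.
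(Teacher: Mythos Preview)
Your proposal follows essentially the same Davies--Aronson route as the paper, and the core computation (the weighted $L^2$ estimate for the conjugated solution yielding the quadratic coefficient $\Lambda^2/\lambda$, then optimization in $\gamma$) matches. Two technical choices differ. First, the paper constructs $\Gamma$ not by mollifying the coefficients but by building \emph{averaged} approximate fundamental solutions (weak solutions with right-hand side $\abs{Q_\epsilon^-}^{-1} 1_{Q_\epsilon^-(Y)}$) following Cho--Dong--Kim, and passes to the limit using local H\"older regularity; your regularization route is equally valid. Second, for the $L^2\to L^\infty$ step the paper does \emph{not} apply Nash to the conjugated function $w=e^{\gamma\psi}u$; instead it applies the Moser local $L^\infty$ bound to $u$ itself on a parabolic cylinder of radius $\sqrt{t-s}$ and then controls the weight pointwise via $e^{2\psi(x)-2\psi(y)}\le e^{2\gamma\sqrt{t-s}}$ there. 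This matters: applying the Nash inequality directly to $w$, as you suggest, would require an a~priori bound on $\norm{w}_{L^1}$, which is not available for the conjugated evolution since $e^{\gamma\psi}$ is unbounded in the relevant regime. The fix is exactly the paper's trick (or, equivalently, obtain $L^1\to L^2$ for the conjugated operator by duality with the adjoint $\sL^\ast$, which enjoys the same weighted $L^2$ estimate). Finally, the paper keeps the drift contribution as a linear-in-$\gamma$ term $\kappa\gamma(t-s)$ and absorbs it into the $\epsilon$-loss only at the very end, whereas you propose to fold it into the quadratic term via Young's inequality up front; both yield the same final constant $c=\lambda/(4\Lambda^2)-\epsilon$.
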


%%%%%%%%%%%%%%%%%%%%%%%%%%
\subsection*{Proof of Proposition \ref{thm_fs01}}
%%%%%%%%%%%%%%%%%%%%%%%%%%%
To prove this proposition, we closely follow methods used in \cite{CDK08}.
We recall some notations introduced there.
For $U \subset\bR^{n+1}$, we write $U(t_0)$ for the set of all points $(t_0,x)$ in $U$ and $I(U)$ for the set of all $t$ such that $U(t)$ is nonempty.
We denote
\[
\tri{u}_{U}^2= \norm{\nabla u}_{L^{2}(U)}^2 +\esssup\limits_{t\in I(U)} \norm{u(\cdot, t)}_{L^2(U(t))}^2.
\]
We ask reader to consult \cite{CDK08} for definition of functions spaces such as $\mathring{V}^{1,0}_2(Q)$.
We first note that if $u \in \mathring{V}^{1,0}_2(\bR^n_T)$ is the weak solution of the problem
\[
\left\{
\begin{aligned}
\sL u = f &\quad \text{ in }\;(t_0,t_1) \times \bR^n,\\
u=\psi &\quad \text{ on }\;\set{t_0}\times \bR^n,
\end{aligned}
\right.
\]
then $u$ satisfies the energy inequality
\begin{equation}\label{eq1118502}
\begin{split}
\sup_{t_0\le t \le t_1} \int_{\bR^n} \abs{u(t,x)}^2\,dx + &\lambda \int_{t_0}^{t_1}\!\!\!\int_{\bR^n} \abs{\nabla u(t,x)}^2\,dxdt \\
&\le \int_{\bR^n} \abs{\psi(x)}^2 dx+ C \left( \int_{t_0}^{t_1}\!\!\! \int_{\bR^n} \abs{f(t,x)}^{\frac{2(n+2)}{n+4}}\,dxdt \right)^{\frac{n+4}{n+2}},
\end{split}
\end{equation}
where $C=C(n, \lambda, \Lambda)$. Here, we essentially use the assumption that $\nabla\cdot a=0$ and $b\ge 0$. A similar statement is true for an adjoint problem.

We construct an approximate fundamental solution as follows.
For $Y=(s,y)\in \bR^n_T$ denote
\[
d_Y:=\sqrt{\max(s, T-s)}
\]
so that $X \in \bR^n_T$ when $\abs{X-Y}_p<d_Y$.
For $0<\epsilon <d_Y$, let $v_\epsilon \in \mathring{V}^{1,0}_2(\bR^n_T)$ be a unique weak solution of the problem
\begin{equation}        \label{mn.eq1}
\left\{\begin{array}{l l}
\sL u=\frac{1}{\abs{Q^{-}_\epsilon}}1_{Q^-_\epsilon(Y)}\\
u(0,\cdot)= 0.\end{array}\right.
\end{equation}
With the aid of the energy inequality \eqref{eq1118502}, the unique solvability of the problem \eqref{mn.eq1} in $\mathring{V}^{1,0}_2(\bR^n_T)$ follows from the Galerkin method described  in \cite[\S III.5]{LSU}.
Observe that the energy inequality \eqref{eq1118502} implies
\begin{equation}        \label{mn.eq1c}
\tri{v_\epsilon}_{\bR^n_T} \le C \epsilon^{-\frac{n}{2}}.
\end{equation}
Next, for a given $F \in C^\infty_c(\bR^n_T)$, let $u \in \mathring{V}^{1,0}_2(\bR^n_T)$ be the weak solution of the \emph{backward problem}
\begin{equation}        \label{mn.eq2}
\left\{\begin{array}{l l}
\sL^\ast u=F\\
u(T,\cdot)= 0.\end{array}\right.
\end{equation}
Then, we have the identity
\begin{equation}        \label{mn.eq3}
\int_{\bR^n_T} v_\epsilon F \,dxdt  =\fint_{Q^-_\epsilon(Y)} u \,dxdt.
\end{equation}
If we assume that $F$ is supported in $Q^{+}_R(X_0) \subset \bR^n_T$, then an inequality similar to \eqref{eq1118502} yields
\begin{equation}        \label{mn.eq4b}
\tri{u}_{\bR^n_T} \le C \norm{F}_{L^{2(n+2)/(n+4)}(Q^{+}_R(X_0))}.
\end{equation}
By the well-known embedding theorem (see e.g., \cite[\S II.3]{LSU}), we have
\[
\norm{u}_{L^{2(n+2)/n}(\bR^n_T)} \le C(n) \tri{u}_{\bR^{n}_T}.
\]
By combining the above two inequalities and using H\"older's inequality, we get
\begin{equation}\label{eq15.54m}
\norm{u}_{L^2(Q^+_R(X_0))}\le C(n,\lambda, \Lambda) R\norm{F}_{L^{2(n+2)/(n+4)}(Q^{+}_R(X_0))}.
\end{equation}

\begin{lemma}   \label{lem01sk}
Let $u$ be a weak solution of
\[
\sL u = F \;\text{ in }\; Q_R^{-}(X_0),
\]
where $Q_R^{-}(X_0) \subset \bR^n_T$.
Then $u$ is locally H\"older continuous in $Q_R^{-}(X_0)$. In particular, $u$ is locally bounded in $Q_R^{-}(X_0)$ and for any $p>0$, we have the estimate
\[
\sup_{Q_{R/2}^{-}(X_0)} \abs{u}  \le C\left( \fint_{Q_R^{-}(X_0)} \abs{u}^p dxdt\right)^{\frac{1}{p}} + C R^2 \norm{F}_{L^\infty(Q_R^{-}(X_0))},
\]
where $C=C(n, p, \lambda, \Lambda, \norm{a}_{L^\infty(\bR^n_T)}, \norm{b}_{L^\infty(\bR^n_T)})$.
A similar statement is true for a weak solution of
\[
\sL^\ast u = F \;\text{ in }\; Q_R^{+}(X_0),
\]
where $Q_R^{+}(X_0) \subset \bR^n_T$.
\end{lemma}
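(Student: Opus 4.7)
\medskip

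\noindent\textbf{Proof proposal.} The plan is to treat this as a textbook De Giorgi--Nash--Moser lemma for parabolic equations in divergence form with bounded lower order coefficients, together with a standard interpolation bootstrap to pass from $L^2$ to $L^p$ for $0<p<2$. The structural input is that since $\nabla\cdot a=0$, the drift satisfies $a\cdot\nabla u=\dv(au)$, so $\sL$ can be rewritten as
\[
\sL u=u_t-\dv\bigl((A\nabla u)-au\bigr)+bu,
\]
a divergence-form operator whose first-order coefficient $au$ is controlled pointwise by $\norm{a}_{L^\infty}\abs{u}$ and whose zero-order coefficient $b$ is bounded and nonnegative. These are exactly the hypotheses under which Aronson--Serrin / Ladyzhenskaya--Solonnikov--Ural'tseva theory (LSU, Chapter~III) applies.

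The first step is to derive a Caccioppoli inequality for $(u-k)_{+}^{q}$, $q\ge 1$, on nested cylinders $Q^{-}_{\rho}(X_0)\Subset Q^{-}_{R}(X_0)$. Testing the weak formulation with $(u-k)_{+}^{2q-1}\eta^{2}$ for a standard space-time cutoff $\eta$, using the divergence-free condition to integrate the drift by parts, $b\ge 0$ to drop the zero-order term on the coercive side, ellipticity~\eqref{ellipticity} to bound the principal part from below, and absorbing the drift term into the gradient term via Cauchy--Schwarz with a weight, produces
\[
\sup_{t}\!\int (u{-}k)_{+}^{2q}\eta^{2}+\!\int\!\abs{\nabla((u{-}k)_{+}^{q}\eta)}^{2}\;\le\; C(n,\lambda,\Lambda,\norm{a}_{\infty},\norm{b}_{\infty})\!\int\!(u{-}k)_{+}^{2q}(\eta^{2}+\abs{\nabla\eta}^{2}+\abs{\eta\partial_t\eta})+ C\norm{F}_{\infty}^{2}\abs{\{u>k\}\cap\supp\eta}.
\]
Combining this with the parabolic Sobolev embedding on $\mathring V^{1,0}_{2}$ (see LSU~\S II.3) and running Moser's iteration through a geometrically shrinking sequence of cylinders between $Q^{-}_{R/2}$ and $Q^{-}_{R}$ yields the $p=2$ case,
\[
\sup_{Q^{-}_{R/2}(X_0)}\abs{u}\;\le\; C\Bigl(\fint_{Q^{-}_{R}(X_0)}\abs{u}^{2}\,dxdt\Bigr)^{1/2}+CR^{2}\norm{F}_{L^\infty(Q^{-}_{R}(X_0))}.
\]

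The extension to arbitrary $p>0$ is by the standard interpolation--iteration device. For $0<p<2$, write $\norm{u}_{L^2(Q^{-}_{\rho})}\le\norm{u}_{L^\infty(Q^{-}_{\rho})}^{1-p/2}\norm{u}_{L^p(Q^{-}_{\rho})}^{p/2}$, insert this into the $p=2$ estimate applied on cylinders $Q^{-}_{\rho}\subset Q^{-}_{\rho'}\subset Q^{-}_{R}$ with $R/2\le\rho<\rho'\le R$, apply Young's inequality with a small parameter to split off the $L^\infty$ contribution, and invoke the familiar ``hole-filling'' lemma (e.g.\ Giaquinta--Giusti) to absorb the $\sup$-term back on the left. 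This yields the estimate for every $p>0$. Finally, H\"older continuity follows from the classical oscillation-decay argument: the above local boundedness combined with the weak Harnack inequality for nonnegative supersolutions of $\sL$ (which holds under the same bounded-coefficient hypothesis, again using $\dv a=0$ and $b\ge 0$) gives $\osc_{Q^{-}_{r}}u\le\theta\,\osc_{Q^{-}_{2r}}u+Cr^{2}\norm{F}_{\infty}$ for some $\theta\in(0,1)$, which iterates to H\"older continuity with an explicit exponent. The adjoint statement is obtained by the time-reversal $t\mapsto T-t$, which turns $\sL^{\ast}$ on $Q^{+}_{R}$ into an operator of the same form on $Q^{-}_{R}$ (still with divergence-free drift and nonnegative potential), so nothing new is required.

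The only substantive point to check---and the one that makes the argument go through at all---is that the drift term does not destroy the energy inequality; this is precisely where the assumption $\dv a=0$ is essential, since without it bounded $a$ would be merely scaling-critical for Moser iteration in dimension $n\ge 2$. All other steps are mechanical applications of the parabolic DGNM machinery, so I do not anticipate any genuine obstacle beyond bookkeeping of constants.
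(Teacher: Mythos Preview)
Your sketch is correct and is precisely the content of the reference the paper invokes (Lady\v{z}enskaja--Solonnikov--Ural'ceva, \S III.8 and \S III.10); the paper gives no independent argument beyond that citation. One small correction to your commentary: a bounded drift $a$ is \emph{subcritical}, not scaling-critical, for the local Moser iteration---the term $\int a\cdot\nabla u\,(u-k)_+^{2q-1}\eta^2$ is controlled by $\norm{a}_\infty\norm{\nabla u}_{L^2}\norm{u}_{L^2}$ and absorbed by Young's inequality without any structural assumption, so the divergence-free condition, while used elsewhere in the paper (the global energy inequality~\eqref{eq1118502} and the Davies exponential argument), is not actually needed for this particular lemma.
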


\begin{proof}
See \cite[\S III.8 and \S III.10]{LSU}.
\end{proof}

By utilizing (\ref{eq15.54m}) and the above lemma, we get
\begin{equation}        \label{mn.eq4d}
\sup_{Q^{+}_{R/2}(X_0)} \abs{u} \le CR^{2}\norm{F}_{L^\infty(Q^{+}_R(X_0))}.
\end{equation}
If $Q^-_\epsilon(Y)\subset Q^+_{R/2}(X_0)$, then by \eqref{mn.eq3} and \eqref{mn.eq4d}, we obtain
\[
\Abs{\int_{Q^+_R(X_0)}  v_\epsilon F\,} \le \fint_{Q^-_\epsilon(Y)}\abs{u}\le CR^{2} \norm{F}_{L^{\infty}(Q^+_R(X_0))}.
\]
Therefore, by duality, it follows that we have
\begin{equation}        \label{mn.eq4e}
\norm{v_\epsilon}_{L^1(Q_R^+(X_0))}\le CR^2.
\end{equation}
We define the \emph{averaged fundamental solution} $\Gamma^\epsilon(X, Y)=\Gamma^\epsilon(t, x, s, y)$ for $\sL$ by setting
\[
\Gamma^\epsilon(\cdot, Y)=v_\epsilon.
\]

\begin{lemma}   \label{lem02sk}
Let $X=(t,x)$, $Y=(s,y) \in \bR^n_T$ and assume $0<\abs{X-Y}_p < \frac16 d_Y$.
Then
\begin{equation}        \label{mn.eq4g}
\abs{\Gamma^{\epsilon}(X, Y)} \le C \abs{X-Y}_p^{-n}, \quad \forall \epsilon < \frac13 \abs{X-Y}_p,
\end{equation}
where $C=C(n, \lambda, \Lambda, \norm{a}_{L^\infty(\bR^n_T)}, \norm{b}_{L^\infty(\bR^n_T)})$.
\end{lemma}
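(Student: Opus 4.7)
I would adapt the scheme of \cite{CDK08}, combining the interior parabolic regularity from Lemma \ref{lem01sk} with the duality-based $L^1$ bound \eqref{mn.eq4e}. Write $\rho = \abs{X - Y}_p$ and set $r = \rho/3$, so that $\epsilon < r$ by hypothesis.

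First, I would show $Q^-_r(X) \cap Q^-_\epsilon(Y) = \emptyset$. Let $\alpha = \abs{x-y}$ and $\beta = \sqrt{\abs{t-s}}$, so that $\rho = \max(\alpha, \beta)$. If $\alpha = \rho$, the spatial balls $B_r(x)$ and $B_\epsilon(y)$ are separated because $\rho > r + \epsilon$. If $\beta = \rho$, then $\rho^2 > r^2 + \epsilon^2$ separates the time intervals $(t-r^2, t)$ and $(s-\epsilon^2, s)$. In either case $\sL v_\epsilon \equiv 0$ on $Q^-_r(X)$, so Lemma \ref{lem01sk} (with $p = 1$ and $F \equiv 0$) yields
\[
\abs{\Gamma^\epsilon(X,Y)} = \abs{v_\epsilon(X)} \le C \fint_{Q^-_r(X)} \abs{v_\epsilon}\,dzd\tau \le \frac{C}{r^{n+2}} \norm{v_\epsilon}_{L^1(Q^-_r(X))}.
\]

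Second, I would dominate the $L^1$ norm via \eqref{mn.eq4e}. Taking for instance $X_0 = (s - 2\rho^2, y)$ and $R = 3\rho$, the bounds $\abs{t-s} \le \rho^2$, $\abs{x-y} \le \rho$, $r = \rho/3$, $\epsilon < r$ let one verify directly the inclusions $Q^-_r(X) \subset Q^+_R(X_0)$ and $Q^-_\epsilon(Y) \subset Q^+_{R/2}(X_0)$. The hypothesis $\rho < d_Y/6$ is precisely what ensures $Q^+_R(X_0) \subset \bR^n_T$; when $Y$ sits near one of the temporal boundaries of the slab, one instead shifts $X_0$ forward or backward in time to exploit whichever side of $Y$ has room of order $d_Y^2$. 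Then \eqref{mn.eq4e} yields $\norm{v_\epsilon}_{L^1(Q^-_r(X))} \le \norm{v_\epsilon}_{L^1(Q^+_R(X_0))} \le CR^2 \le C\rho^2$.

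Combining the two displays gives
\[
\abs{\Gamma^\epsilon(X,Y)} \le \frac{C\rho^2}{r^{n+2}} \le C'\rho^{-n} = C'\abs{X-Y}_p^{-n},
\]
which is exactly \eqref{mn.eq4g}. The main obstacle, and really the only point where care is required, is the geometric bookkeeping in the second step: verifying, for every admissible configuration of $X$ and $Y$, that a single forward parabolic cylinder of radius $\sim \rho$ can be placed inside $\bR^n_T$ so as to contain $Q^-_r(X)$ and to contain $Q^-_\epsilon(Y)$ at scale $R/2$, as required by the setup leading to \eqref{mn.eq4e}. Everything else reduces to the interior parabolic regularity of Lemma \ref{lem01sk} and the duality bound \eqref{mn.eq4e} already established.
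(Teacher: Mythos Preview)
Your proposal is correct and follows essentially the same approach as the paper: set $r=\rho/3$, observe that $\sL v_\epsilon=0$ in $Q^-_r(X)$, apply Lemma~\ref{lem01sk} with $p=1$, and bound the resulting $L^1$ norm by placing $Q^-_r(X)$ and $Q^-_\epsilon(Y)$ inside a common forward cylinder so that \eqref{mn.eq4e} applies. The paper chooses $X_0=(s-4d^2,y)$ and $R=6d$ rather than your $X_0=(s-2\rho^2,y)$ and $R=3\rho$, but this is only a difference of constants; your explicit verification that $Q^-_r(X)\cap Q^-_\epsilon(Y)=\emptyset$ and your remark about adjusting $X_0$ near the temporal boundary are in fact more careful than what the paper writes down.
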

\begin{proof}
Denote $d=\abs{X-Y}_p$, and let $r=\frac13 d$, $X_0=(y, s-4d^2)$, and $R=6d$.
It is easy to see that for $\epsilon<\frac13 d$, we have
\[
Q^{-}_\epsilon(Y)\subset Q^{+}_{R/2}(X_0),\quad Q^{-}_r(X)\subset Q^{+}_{R}(X_0).
\]
and also that $v_\epsilon$ satisfies $\sL v_\epsilon=0$ in $Q^{-}_r(X)$.
Then, by Lemma~\ref{lem01sk}, we have
\[
\norm{v_\epsilon}_{L^\infty(Q_{r/2}^{-}(X))}\le \frac{C}{r^{n+2}}\norm{v_\epsilon}_{L^1(Q^{-}_r(X))}.
\]
Therefore, by \eqref{mn.eq4e}, we have $\abs{v_\epsilon(X)}\le Cr^{-n}$, which implies \eqref{mn.eq4g}.
\end{proof}

For $\epsilon < r < R< \frac16 d_Y$, let $\eta: \bR^{n+1} \to \bR$ be a smooth function such that
\begin{equation}                \label{eq_eta}
0\le \eta \le 1,\quad \eta \equiv 0\;\text{ on }\; Q_{r}(Y),\quad \eta \equiv 1\; \text{ on }\;Q_{R}(Y)^c,\quad \abs{\nabla \eta}^2 + \abs{\nabla ^2\eta}+ \abs{\eta_t}\le \tfrac{12}{(R-r)^2}.
\end{equation}
Recall that $v_\epsilon$ satisfies \eqref{mn.eq1}.
By testing with $\eta^2 v_\epsilon$ and using assumption $\nabla \cdot a=0$, we have
\begin{multline*}
0 = \int_{\bR^n} \frac12 (\eta^2 v_\epsilon^2)_t -\int_{\bR^n} \eta \eta_t v_\epsilon^2+\int_{\bR^n} \eta^2 A \nabla v_\epsilon \cdot \nabla v_\epsilon \\
+ \int_{\bR^n} 2 \eta  A \nabla v_\epsilon \cdot \nabla \eta v_\epsilon -\int_{\bR^n}  a\cdot \nabla \eta  \eta v_\epsilon^2 + \int_{\bR^n} \eta^2 b v_\epsilon^2.
\end{multline*}
Then by using \eqref{ellipticity} and $b \ge 0$, we get
\[
\int_{\bR^n} \frac12 (\eta^2 v_\epsilon^2)_t + \lambda \int_{\bR^n} \eta^2 \abs{\nabla v_\epsilon}^2
\le  \int_{\bR^n} \eta \abs{\eta_t}\, \abs{v_\epsilon}^2 + 2\Lambda\int_{\bR^n} \eta \abs{\nabla  v_\epsilon}\,\abs{\nabla \eta}\,\abs{v_\epsilon}+  \int_{\bR^n}  \eta \abs{\nabla \eta} \,\abs{a} v_\epsilon^2.
\]
By using Young's inequality and integrating over $t$, we get
\[
\sup_{0\le t \le T} \, \frac12  \int_{\bR^n} \eta^2 v_\epsilon^2+ \frac{\lambda}{2} \int_{\bR^n_T} \eta^2 \abs{\nabla v_\epsilon}^2
\le \int_{\bR^n_T}  \left\{\abs{\eta_t}+\frac{2\Lambda^2}{\lambda} \abs{\nabla \eta}^2 +\abs{a} \,\abs{\nabla \eta}\right\}  v_\epsilon^2.
\]
Therefore, by \eqref{eq_eta} and noting that $R-r< \sqrt{T}$ and so  
\[
|\nabla \eta| \le \sqrt{12}/(R-r) = \sqrt{12}(R-r)/(R-r)^2 \le \sqrt{12  T}/(R-r)^2,
\]
we have
\begin{equation}            \label{eq13.15th}
\sup_{0\le t \le T} \,  \int_{\bR^n} \eta^2 v_\epsilon^2+ \int_{\bR^n_T} \eta^2 \abs{\nabla v_\epsilon}^2 \le \frac{C}{(R-r)^2} \int_{Q_R(Y) \setminus Q_r(Y)} v_\epsilon^2,
\end{equation}
where $C=C(n, \lambda, \Lambda, T, \norm{a}_{L^\infty(\bR^n_T)})$.

Then by setting $r=\frac12 R$ in \eqref{eq13.15th} and applying Lemma~\ref{lem02sk}, we obtain
\[
\tri{\Gamma^\epsilon(\cdot, Y)}_{\bR^n_T \setminus Q_R(Y)}^2 \le C
R^{-2} \int_{\set{R/2<\abs{X-Y}_p<R}} \abs{X-Y}_p^{-2n}\,dX  \le
CR^{-n},
\]
whenever $\epsilon < \frac16 R$ and $R<\frac16 d_Y$.
On the other hand, in the case when $\epsilon \ge \frac16 R$, \eqref{mn.eq1c} yields
\[
\tri{\Gamma^\epsilon(\cdot, Y)}_{\bR^n_T \setminus Q_R(Y)}^2 \le \tri{\Gamma^\epsilon(\cdot, Y)}_{\bR^n_T}^2 \le C \epsilon^{-n} \le C R^{-n}.
\]
Therefore, we have for all $\epsilon<d_Y$ and $R<\frac16 d_Y$
\begin{equation}                \label{eq13.35th}
\tri{\Gamma^\epsilon(\cdot, Y)}_{\bR^n_T \setminus Q_R(Y)}^2 \le CR^{-n}.
\end{equation}

We note that the estimate \eqref{eq13.35th} corresponds to \cite[(4.6)]{CDK08}.
With Lemma~\ref{lem01sk} at hand, we can repeat the same argument as in \cite{CDK08} to construct the fundamental solution
\[
\Gamma(X, Y)=\Gamma(t, x,s, y)
\]
for $\sL$ in $\bR^n_T$. See Section 4.2 -- 4.3 of \cite{CDK08} for details.

Next, we show that $\Gamma(x,t,y,s)$ satisfies the Gaussian bound \eqref{gaussian01}. We modify an argument in \cite{HK04}, which is in turn based on Davies \cite{Davies} and Fabes-Stroock \cite{FS}.
Let $\psi$ be a bounded Lipschitz function on $\bR^n$ satisfying $\abs{\nabla \psi} \le \gamma$ a.e. for some $\gamma>0$ to be chosen later.
Fix $s \in (0,T)$.
For $s<t<T$, we define an operator $P^\psi_{s\to t}$ on  $L^2(\bR^n)$ as follows. For a given $f\in L^2(\bR^n)$, let $u \in \mathring{V}^{1,0}_2((s,T)\times \bR^n)$ be the weak solution of the problem
\[
\left\{\begin{array}{l l}
\sL  u=0\\
u(s,\cdot)= e^{-\psi} f.\end{array}\right.
\]
Then, we define
\[
P^\psi_{s\to t} f(x):= e^{\psi(x)} u(t,x)
\]
so that
\[
P^\psi_{s\to t} f(x)=
e^{\psi(x)}\int_{\bR^n} \Gamma(t, x,s,y)e^{-\psi(y)}f(y)\,dy.
\]

Let us denote
\[
I(t):=\int_{\bR^n} e^{2\psi(x)} u(t, x)^2\,dx.
\]
Then, $I'(t)$ satisfies for a.e. $t \in (s, T)$ that
\begin{align*}
I'(t) &=-2\int_{\bR^n} A \nabla u \cdot \nabla (e^{2\psi} u )+ e^{2\psi} u\,a \cdot \nabla u + e^{2\psi} b u^2\,dx \\
& \le -2\int_{\bR^n} e^{2\psi} A \nabla u \cdot \nabla u\,dx -4\int_{\bR^n} e^{2\psi} u\, A \nabla u \cdot \nabla\psi\,dx -2\int_{\bR^n} a \cdot \nabla u \,e^{2\psi} u\,dx,
 \end{align*}
where we used that $b \ge 0$.

By using $\nabla \cdot a=0$, we find that
\[
0=\int_{\bR^n} a \cdot \nabla(e^\psi u)\, e^\psi u\,dx = \int_{\bR^n} a \cdot \nabla u \,e^{2\psi} u \,dx+ \int_{\bR^n} a \cdot \nabla \psi \, e^{2\psi} u^2\,dx.
\]
For the rest of proof, we set
\[
\kappa:=\norm{a}_{L^\infty(\bR^n_T)}.
\]
By using $\abs{\nabla \psi} \le \gamma$, we then obtain
\begin{align*}
I'(t) & \le  -2 \lambda \int_{\bR^n} e^{2\psi} \abs{\nabla u}^2 \,dx + 4 \Lambda \gamma \int_{\bR^n} e^{\psi} \abs{u}\,e^{\psi} \abs{\nabla u} \,dx  + 2 \gamma \kappa \int_{\bR^n} e^{2\psi} u^2\,dx \\
& \le  \left(2 \Lambda^2 \lambda^{-1} \gamma^2 + 2 \kappa \gamma \right) \int_{\bR^n} e^{2\psi} u^2\,dx = 2(\Lambda^2 \lambda^{-1} \gamma^2 + \kappa \gamma) \,I(t).
\end{align*}
The initial condition $u(s, \cdot)= e^{-\psi}f$ yields
\[
I(t)\le  e^{2 (\Lambda^2 \lambda^{-1} \gamma^2 + \kappa \gamma) (t-s)}\norm{f}_{L^2(\bR^n)}^2.
\]
Since $I(t)=\norm{P^\psi_{s\to t} f}_{L^2(\bR^n)}^2$, we have derived
\begin{equation}                \label{eq15.30e}
\norm{P^\psi_{s\to t} f}_{L^2(\bR^n)} \le e^{(\Lambda^2 \lambda^{-1} \gamma^2 + \kappa \gamma) (t-s)}\norm{f}_{L^2(\bR^n)}.
\end{equation}

By Lemma~\ref{lem01sk}, we estimate
\begin{align*}
e^{-2\psi(x)} \abs{P^\psi_{s\to t}f(x)}^2 &= \abs{u(x,t)}^2\\
&\le \frac{C}{(t-s)^{\frac{n+2}{2}}} \int_s^t\int_{B_{\sqrt{t-s}}(x)}\abs{u(y,\tau)}^2\,dy\,d\tau\\
&\le \frac{C}{(t-s)^{\frac{n+2}{2}}} \int_s^t\int_{B_{\sqrt{t-s}}(x)}e^{-2\psi(y)}
 \abs{P^\psi_{s\to\tau}f(y)}^2\,dy\,d\tau.
\end{align*}
Hence, by using \eqref{eq15.30e} we find
\begin{align*}
\abs{P^\psi_{s\to t}f(x)}^2 &\le C(t-s)^{-\frac{n+2}{2}}
\int_s^t\int_{B_{\sqrt{t-s}}(x)}e^{2\psi(x)-2\psi(y)} \abs{P^\psi_{s\to\tau} f(y)}^2\,dy\,d\tau\\
&\le C(t-s)^{-\frac{n+2}{2}} \int_s^t\int_{B_{\sqrt{t-s}}(x)}e^{2\gamma\sqrt{t-s}}
\,\abs{P^\psi_{s\to\tau} f(y)}^2\,dy\,d\tau\\
&\le C (t-s)^{-\frac{n+2}{2}}\,e^{2\gamma\sqrt{t-s}} \int_s^t e^{2(\Lambda^2 \lambda^{-1} \gamma^2 + \kappa \gamma) (\tau-s)}\norm{f}_{L^2(\bR^n)}^2\,d\tau\\
&\le C (t-s)^{-\frac{n}{2}}\,e^{2\gamma\sqrt{t-s}+2(\Lambda^2 \lambda^{-1} \gamma^2 + \kappa \gamma) (t-s)} \norm{f}_{L^2(\bR^n)}^2.
\end{align*}
We have thus derived the following $L^2\to L^\infty$ estimate for $P^\psi_{s\to t}$:
\begin{equation}                \label{eq16.40e}
\norm{P^\psi_{s\to t} f}_{L^\infty(\bR^n)} \le  C (t-s)^{-\frac{n}{4}}\,e^{\gamma\sqrt{t-s}+(\Lambda^2 \lambda^{-1} \gamma^2 + \kappa \gamma) (t-s)} \norm{f}_{L^2(\bR^n)}.
\end{equation}

We also define the operator $Q^\psi_{t\to s}$ on $L^2(\bR^n)$  by
\[
Q^\psi_{t\to s}g(y)= e^{-\psi(y)}v(y,s),
\]
where $v$ is the weak solution of the backward problem
\[
\left\{\begin{array}{l l}
\sL^\ast  u=0\\
u(t,\cdot)= e^{\psi} g.\end{array}\right.
\]
Then, by a similar calculation, we get
\[
\norm{Q^\psi_{t\to s} g}_{L^\infty(\bR^n)} \le  C (t-s)^{-\frac{n}{4}}\,e^{\gamma\sqrt{t-s}+(\Lambda^2 \lambda^{-1} \gamma^2 + \kappa \gamma) (t-s)}
\norm{g}_{L^2(\bR^n)}.
\]

Since (see \cite[Section 5.1]{CDK08})
\[
\int_{\bR^n}(P^\psi_{s\to t} f) \, gdx=  \int_{\bR^n} f\,(Q^\psi_{t\to s} g)dx,
\]
by duality, for any $f \in C^\infty_c(\bR^n)$, we have
\begin{equation}                \label{eq16.41e}
\norm{P^\psi_{s\to t} f}_{L^2(\bR^n)} \le  C (t-s)^{-\frac{n}{4}}\,e^{\gamma\sqrt{t-s}+(\Lambda^2 \lambda^{-1} \gamma^2 + \kappa \gamma) (t-s)} \norm{f}_{L^1(\bR^n)}.
\end{equation}

Now,  set $r=\frac{s+t}{2}$ and observe that by the uniqueness, we have
\[
P^\psi_{s\to t}f= P^\psi_{r\to t}(P^\psi_{s\to r}f),
\quad \forall f\in C^\infty_c(\bR^n).
\]
Then, by noting that
\[
t-r=r-s=\frac12 (t-s),
\]
we obtain from \eqref{eq16.40e} and \eqref{eq16.41e} that for any $f\in C^\infty_c(\bR^n)$, we have
\[
\norm{P^\psi_{s\to t} f}_{L^\infty(\bR^n)} \le  C (t-s)^{-\frac{n}{2}}\,e^{\gamma\sqrt{2(t-s)}+(\Lambda^2 \lambda^{-1} \gamma^2 + \kappa \gamma) (t-s)} \norm{f}_{L^1(\bR^n)}.
\]

For fixed $x, y\in \bR^n$ with $x\neq y$, the above estimate imply, by duality, that
\begin{equation}                \label{eq16.51e}
e^{\psi(x)-\psi(y)}\,\abs{\Gamma(t,x,y,s)} \le C (t-s)^{-\frac{n}{2}} \,e^{\gamma\sqrt{2(t-s)}+(\Lambda^2 \lambda^{-1} \gamma^2 + \kappa \gamma) (t-s)}.
\end{equation}

We now set
\[
\psi(z)=\gamma \min( \abs{z-y}, \abs{x-y}) \quad\text{and}\quad \gamma= \frac{\lambda}{2\Lambda^2}\,\frac{\abs{x-y}}{t-s}.
\]

It should be clear that $\psi$ is a bounded Lipschitz function satisfying $\abs{\nabla\psi} \le \gamma$ a.e.,
\[
\psi(x)=\gamma \abs{x-y}=\frac{\lambda}{2\Lambda^2}\,\frac{\abs{x-y}^2}{t-s},\quad \text{and} \quad \psi(y)=0.
\]
Therefore, \eqref{eq16.51e} yields
\begin{align*}
\abs{\Gamma(t, x, s,y)} &\le C(t-s)^{-\frac{n}{2}} \exp \left\{ \frac{\lambda}{\sqrt{2}\Lambda^2}\, \frac{\abs{x-y}}{\sqrt{t-s}} + \frac{\lambda}{4\Lambda^2} \,\frac{\abs{x-y}^2}{t-s}+ \frac{\kappa \lambda}{2\Lambda^2}\, \abs{x-y} -\frac{\lambda}{2\Lambda^2}\, \frac{\abs{x-y}^2}{t-s} \right\} \\
&\le C(t-s)^{-\frac{n}{2}}  \exp \left\{ \left(\frac{\lambda}{\sqrt2\Lambda^2} + \frac{\kappa \lambda \sqrt{T}}{2\Lambda^2} \right) \frac{\abs{x-y}}{\sqrt{t-s}} - \frac{\lambda}{4\Lambda^2}\, \frac{\abs{x-y}^2}{t-s} \right\}.
\end{align*}
Note that for any $\epsilon \in (0, \lambda/ 4\Lambda^2)$, there exists a number $N=N(\lambda, \Lambda, T, \kappa, \epsilon)$ such that
\[
e^{\left(\frac{\lambda}{\sqrt2\Lambda^2} + \frac{\kappa \lambda \sqrt{T}}{2\Lambda^2} \right) r - \frac{\lambda}{4\Lambda^2} r^2} \le N e^{ -\left(\frac{\lambda}{4\Lambda^2}-\epsilon \right)r^2}, \quad\forall r \ge 0,
\]
and recall that $\kappa=\norm{a}_{L^\infty(\bR^n_T)}$. Therefore, we obtain the Gaussian bound \eqref{gaussian01}.
\qed

%%%%%%%%%%%%%%%%%%%%%%%%%%%
\subsection*{Proof of Theorem \ref{theorem c}}
%%%%%%%%%%%%%%%%%%%%%%%%%%%
To prove Theorem \ref{theorem c}, we begin with the following lemma.

\begin{lemma}   \label{lem03sk}
Assume the hypothesis of Theorem \ref{theorem c} holds. Let $u$ be a solution of
\[
\cL u = 0 \;\text{ in }\; Q_r^{-}(X_0),
\]
where $Q_r^{-}(X_0) \subset \bR^3_T$. Then $u$, $\nabla u$, $u_t$, $\nabla^2 u$ are locally H\"older continuous in $Q_r^{-}(X_0)$. In particular, we have the following estimate:
\[
r \norm{\nabla u}_{L^\infty(Q_{r/2}^{-}(X_0))} + r^2 \norm{\nabla^2 u}_{L^\infty(Q_{r/2}^{-}(X_0))} +r^2 \norm{u_t}_{L^\infty(Q_{r/2}^{-}(X_0))}  \le C \norm{u}_{L^\infty(Q_{r}^{-}(X_0))},
\]
where $C=C\bigl(\norm{a}_{C^{\beta, \beta/2}(\bR^3_T)}, \norm{b}_{C^{\alpha, \alpha/2}(\bR^3_T)}\bigr)$.
\end{lemma}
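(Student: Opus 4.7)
My plan is to establish Lemma~\ref{lem03sk} by a standard interior parabolic Schauder bootstrap applied to the non-divergence form equation $u_t - \Delta u = -a \cdot \nabla u - bu =: g$. Via the substitution $\tilde u(t,x) := u(t_0 + r^2 t, x_0 + rx)$ together with the corresponding rescaling $\tilde a(t,x) := r\,a(t_0+r^2 t, x_0 + rx)$ and $\tilde b(t,x) := r^2\,b(t_0 + r^2 t, x_0+rx)$, one reduces to the case $X_0 = 0$ and $r = 1$; the factors $r$ and $r^2$ in the target estimate are produced upon inverting this scaling, and since $r \le \sqrt{T}$ the $C^{\beta,\beta/2}$-norms of $\tilde a, \tilde b$ remain controlled by those of $a, b$.

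In this normalized setting, I would first invoke Lemma~\ref{lem01sk} with $F \equiv 0$, which gives $u \in C^{\alpha_0, \alpha_0/2}_{\mathrm{loc}}(Q_1^-(0))$ for some $\alpha_0 \in (0,1)$ depending only on $\norm{a}_{L^\infty}$ and $\norm{b}_{L^\infty}$, together with an interior H\"older estimate controlling the relevant norm by $\norm{u}_{L^\infty(Q_1^-(0))}$. Next, standard Caccioppoli estimates give $\nabla u \in L^2_{\mathrm{loc}}$, and iterating interior parabolic Calder\'on--Zygmund $L^p$-estimates for $\partial_t - \Delta$ upgrades this to $u \in W^{2,1}_{p,\mathrm{loc}}$ for every $p < \infty$; by parabolic Sobolev embedding with $p$ large, $\nabla u$ becomes locally H\"older continuous in space-time with some exponent $\gamma \in (0,1)$. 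Consequently $g = -a \cdot \nabla u - bu$ is locally H\"older continuous with exponent $\gamma' := \min(\beta, \gamma)$, and classical interior parabolic Schauder estimates (see, e.g., \cite[Chapter~IV, \S 10]{LSU}) applied to $u_t - \Delta u = g$ yield $u \in C^{2+\gamma', 1+\gamma'/2}_{\mathrm{loc}}$ together with the interior bound
\[
\norm{\nabla u}_{L^\infty(Q_{1/2}^-(0))} + \norm{\nabla^2 u}_{L^\infty(Q_{1/2}^-(0))} + \norm{u_t}_{L^\infty(Q_{1/2}^-(0))} \le C\bigl(\norm{u}_{L^\infty(Q_1^-(0))} + \norm{g}_{C^{\gamma',\gamma'/2}(Q_{3/4}^-(0))}\bigr),
\]
with $C$ depending on the $C^{\beta,\beta/2}$-norms of $a$ and $b$. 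Combining with the bound on $\norm{g}_{C^{\gamma', \gamma'/2}}$ obtained above (iterating once more with $\nabla u \in C^{\gamma', \gamma'/2}$ to reach the exponent $\beta$ if desired), one absorbs the lower-order terms and unscales to recover the stated estimate with the correct powers of $r$, simultaneously producing the local H\"older continuity of $\nabla u$, $\nabla^2 u$, and $u_t$.

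The main obstacle is the intermediate step, where Schauder theory cannot be applied directly to $\cL u = 0$ because the forcing $g = -a \cdot \nabla u - bu$ is only known a~priori to be bounded, not yet H\"older continuous. Promoting $u$ from mere H\"older continuity (Lemma~\ref{lem01sk}) to $C^{1+\gamma,(1+\gamma)/2}$-regularity requires the Calder\'on--Zygmund $L^p$-iteration sketched above, which is classical but demands care with cut-off functions and rescaling in order to preserve the purely interior nature of the estimate. Once this hurdle is cleared, the remaining argument is a direct application of the standard parabolic Schauder machinery from \cite{LSU}.
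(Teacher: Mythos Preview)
Your proposal is correct and is precisely what the paper's one-line proof (``It is a consequence of the standard Schauder theory'') has in mind; you have simply unpacked the details. One remark: the intermediate $L^p$-bootstrap you flag as the main obstacle can in fact be bypassed by citing the interior Schauder theory in \cite[Chapter~IV]{LSU} directly for the operator $\cL = \partial_t - \Delta + a\cdot\nabla + b$ with H\"older lower-order coefficients, which already lifts bounded weak solutions to $C^{2+\beta,1+\beta/2}$ with the stated interior estimate, so your treatment of $g = -a\cdot\nabla u - bu$ as an external forcing is a valid but slightly longer route to the same conclusion.
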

\begin{proof}
It  is a consequence of the standard Schauder theory.
\end{proof}

We apply Proposition~\ref{thm_fs01} to the operator $\cL$ and construct the fundamental solution $\Gamma(t,x,s,y)$.
Note that we have $\lambda=\Lambda=1$ in this case so that we have $\frac{\lambda}{4\Lambda^2}=\frac14$, and thus \eqref{gaussian01} follows.

Next, to prove \eqref{gaussian031}, let $r:=\frac12 \sqrt{t-s}$ and note that $u(\cdot):=\Gamma(\cdot, Y)$ satisfies
\[
\cL u=0 \quad \text{in $Q_r^{-}=Q_r^{-}(X)$}.
\]
Then, by Lemma~\ref{lem03sk}, we have
\begin{equation} \label{grad of Gamma}
\begin{split}
& \sqrt{t-s}\, \abs{ \nabla_x \Gamma(t,x,s,y)} + (t-s)\, \abs{ \nabla_x^2 \Gamma(t,x,s,y)} + (t-s)\, \abs{ \partial_t \Gamma(t,x,s,y)} \\
&\le  C \norm{u}_{L^\infty(Q_{r}^{-}(X))}.
\end{split}
\end{equation}
Note that for $Z=(\tau, z) \in Q_r^{-}(X)$, we have
\[
\textstyle
\frac34 (t-s) \le \tau-s \le t-s\quad\text{and}\quad \abs{x-y} - \frac12 \sqrt{t-s} \le \abs{z-y} \le \abs{x-y} + \frac12 \sqrt{t-s}.
\]
Therefore, by \eqref{gaussian01}, we have
\begin{equation}            \label{u bound Q}
\norm{u}_{L^\infty(Q_r^{-}(X))} \le C (t-s)^{-\frac{3}{2}} e^{-c' \frac{\abs{x-y}^2}{t-s}},
\end{equation}
where $c'>0$ is an absolute constant and $C=C\bigl(T,
\norm{a}_{C^{\beta, \beta/2}(\bR^3_T)}, \norm{b}_{C^{\beta,
\beta/2}(\bR^3_T)}\bigr)$. By (\ref{grad of Gamma}) and (\ref{u
bound Q}), we obtain the estimate \eqref{gaussian031}, which
complete the proof of Theorem \ref{theorem c}.

\vspace{5ex}

%%%%%%%%%%%%%%%
\section*{Acknowledgments}
%%%%%%%%%%%%%%%
H. Bae was supported by NRF-2015R1D1A1A01058892. 

K. Kang was supported by NRF-2017R1A2B4006484 and NRF-20151009350. 

S. Kim was supported by NRF-20151009350.

%%%%%%%%%%%%%%
\bibliographystyle{amsplain}

\end{document}